\documentclass[a4paper,12pt]{amsart}
\usepackage{amsmath,amsfonts,amssymb,amsthm}
\usepackage{graphics}
\usepackage{epsfig}
\usepackage{esint}
\usepackage{shuffle}
\usepackage{istgame}
\setlength{\parindent}{0cm}

\usepackage{vmargin}
\setpapersize{A4}
\setmargins{2.5cm}{1.5cm}{16.5cm}{23.42cm}{10pt}{1cm}{0pt}{2cm}

\usepackage{enumerate}

\newcommand{ \E }{ \mathbb{E} }

\newcommand{\ignore}[1]{}

\usepackage[hidelinks,hyperindex,breaklinks]{hyperref}

\newtheorem{proposition}{Proposition}
\newtheorem{theorem}{Theorem}

\newtheorem{lemma}{Lemma}
\newtheorem{corollary}{Corollary}


\begin{document}

\title[A critical drift-diffusion equation: connections to the diffusion on $ \textbf{SL}(2)$]{A critical drift-diffusion equation:\\ connections to the diffusion on $ \textbf{SL}(2)$}
\author[P. Morfe]{Peter S.~Morfe}
\author[F. Otto]{Felix Otto}
\author[C. Wagner]{Christian Wagner}

\maketitle

\begin{abstract}
In this note, we connect two seemingly unrelated objects:
On the one hand is a two-dimensional drift-diffusion process $X$ with divergence-free
and time-independent drift $b$. The drift is given by a stationary Gaussian ensemble,
and we focus on the critical case where a small-scale cut-off is necessary for well-posedness
and the large-scale cancellations lead to a borderline super-diffusive behavior.
On the other hand is the natural diffusion $F$ on the Lie group $\textbf{SL}(2)$
of matrices of determinant one.
As a consequence of this connection, the strongly non-Gaussian character of $F$ 
transmits to how $X$ depends on its starting point.
\end{abstract}

\section{Introduction and main results}

This note leverages and refines the observations in \cite{OW24}, which itself
is based on \cite{CMOW} (see also \cite{MOW24}
for a more efficient presentation). We refer to these papers for
more context and references.


\subsection{The drift-diffusion process $X$ for a Gaussian ensemble of divergence-free
drifts $b$, and its expected position $u$}\label{ss:X}
The main result of this note connects two seemingly unrelated objects:
On the one hand, we consider the drift-diffusion process
\begin{align}\label{ao02}
dX=b(X)dt+\sqrt{2}dW
\end{align}
in the Euclidean plane,
where $W$ denotes the standard Brownian motion\footnote{the factor of $\sqrt{2}$ in (\ref{ao02})
avoids a factor $\frac{1}{2}$ in (\ref{ao04})}.
We assume that the time-independent drift $b$, a tangent vector field, is divergence-free:
\begin{align}\label{ao24}
\nabla.b=0,
\end{align}
where the dot denotes the natural pairing between the differential $\nabla$,
which we consider a cotangent vector\footnote{since we think of $\nabla u$ as a cotangent
vector}, and the tangent vector $b$ (in view of the upcoming tensor analysis and
elementary representation theory, we are
deliberately careful in distinguishing tangent and cotangent vectors).
We further assume that $b$ is random and of white-noise character modulo the constraint 
(\ref{ao24}). As we shall discuss below, a small-scale cut-off is required for well-posedness.

\medskip

More specifically, the centered and stationary Gaussian ensemble
is characterized by its covariance tensor $c(x-y)$ $=\mathbb{E}b(x)\otimes b(y)$,
an element of the tensor product $(\mbox{tangent space})$ $\otimes(\mbox{tangent space})$,
and a function of the tangent vector $z=x-y$.
It is convenient to specify $c$ on the level of its positive semidefinite Fourier transform
${\mathcal F}c(k)=(2\pi)^{-1}\int dz\exp(-\iota k.z)c(z)$:
%
\begin{align}\label{ao08}
{\mathcal F}c(k)=\varepsilon^2I(|k|\le 1)(\sum_{i=1}^2e_{i}\otimes e_{i}-\frac{k^*\otimes k^*}{|k|^2}),
\end{align}
where $\{e_1,e_2\}$ denotes the Cartesian basis of tangent space and 
$k^*$ is the tangent vector related to the cotangent wave vector $k$ 
by the Euclidean inner product. Since the Fourier transform of the covariance
function of $\nabla.b$ is given by the contraction of $k\otimes k$ with (\ref{ao08}),
and thus vanishes, we learn that (\ref{ao24}) is indeed satisfied.
Since we non-dimensionalized length by setting the small-scale cut-off to unity,
the amplitude $\varepsilon$ is a genuine non-dimensional parameter
(also known as the P\'eclet number);
we will be interested in the regime $0\le\varepsilon\ll 1$.

\medskip

We will focus on the expected position $u(x,t)$ 
(with respect to the thermal noise $W$) of
the solution $X$ of (\ref{ao02}) with $X(t=0)=x$. It is characterized by the
drift-diffusion equation
\begin{align}\label{ao04}
\partial_tu-b.\nabla u-\triangle u=0,\quad u(t=0,x)=x,
\end{align}
where we recall that the dot denotes the natural pairing between the tangent vector $b$
and the cotangent vector $\nabla$.
Let us briefly discuss the need for the small-scale cut-off in (\ref{ao08}):
If it were not present, $b$ would share the scale invariance of spatial white noise;
hence in view of the transport term $b.\nabla u$,
the solution $u$ can be H\"older continuous of exponent $\alpha$
only provided $\alpha<2-\frac{d}{2}=1$ (where $2$ is the order of the equation
and $-\frac{d}{2}$ comes from the scale invariance of spatial white noise).
In fact, we establish that $u$ is Lipschitz continuous up to a logarithm,
see the discussion after Corollary \ref{Cor:1}.
However, this regularity is not sufficient to give a sense to
$b.\nabla u$, even when rewriting it as $\nabla. ub$,
since the sum of the orders of regularity of $u$ and $b$ is (borderline) negative.

\medskip

In view of the divergence condition (\ref{ao24}), a connection between
(\ref{ao02}) and a process on $\textbf{SL}(2)$ is not surprising per se. If for
a moment, we disregard diffusion in (\ref{ao02}) and (\ref{ao04}), we obtain
\begin{align}\label{ao30}
d \nabla u ( 0, t ) = \nabla u ( 0 , t ) \nabla b ( X_t ) dt , 
\quad { \rm where } \quad X_{ t } ~ { \rm starts ~ at ~ the ~ origin } . 
\end{align}
A remark on our tensorial notation is in order: Like for $b$, we think of
increments of $u$ as tangent vectors; since we consider the differential $\nabla$
as a cotangent vector, $\nabla b$ and $\nabla u$ are elements
of the tensor space $(\mbox{tangent space})$ $\otimes(\mbox{cotangent space})$,
which is canonically isomorphic to the endomorphism space ${\rm End}(\mbox{cotangent space})$.
Hence the product in (\ref{ao30}) is a product of endomorphisms.
Our definition of $\nabla u$ is the transpose of the usual definition
of the derivative of a map as an element of ${\rm End}(\mbox{tangent space})$;
we opted in favor of it because the multiplication order in (\ref{ao30}) is more standard
on the stochastic equation level, see (\ref{ao01bis}) and (\ref{ao01}).

\medskip

According to Jacobi's formula, (\ref{ao30}) implies
\begin{align}\label{ao31}
d{\rm det}\nabla u=({\rm det}\nabla u)({\rm tr}\nabla b)dt\quad\mbox{along}\quad x=X_{t}.
\end{align}
Because of ${\rm tr}\nabla b$ $=\nabla.b$, we learn from (\ref{ao24}) 
that $\nabla b(X_t)$ is an element of $\mathfrak{sl}(2)$, the linear space
of trace-free endomorphisms. We then learn from (\ref{ao31})
that ${\rm det}\nabla u=1$ is preserved, i.~e.~$\nabla u(X_t,t)$ is an element of 
$\textbf{SL}(2)$, the space of endomorphisms of determinant one.


\subsection{A natural notion of diffusion $F$ on the Lie group $\textbf{SL}(2)$}\label{ss:F}

On the other hand, we consider a diffusion on the Lie group $\textbf{SL}(2)$, 
as given by the Stratonovich evolution equation
\begin{align}\label{ao01bis}
dF=F\circ dB,
\end{align}
where $B$ is a Brownian motion on the corresponding Lie algebra $\mathfrak{sl}(2)$,
which is a three-dimensional linear space. The tensorial
structure of the r.~h.~s.~of (\ref{ao01bis}) is to be
understood as a product of endomorphisms (of cotangent space). 
The Stratonovich form can be best understood as follows: 
Approximate a realization of $B$ by a smooth path $B'$ in the local uniform topology,
solve (\ref{ao01bis}) classically for some initial data; then almost surely, this solution
$F'$ is locally uniformly close to the Stratonovich solution $F$. In particular,
the Stratonovich form of (\ref{ao01bis}) is compatible with the chain rule,
so that as in (\ref{ao31}) we have $d{\rm det}F$ $=({\rm det}F)\circ d{\rm tr} B$ $=0$. 
Hence (\ref{ao01bis})
indeed defines a time-stationary Markov process on $\textbf{SL}(2)$. Due to the left-invariance of the evolution \eqref{ao01bis}, there is no loss in generality assuming
\begin{align*}
F_{ \tau = 0 } = { \rm id } .
\end{align*}

\medskip

Fixing the Brownian motion $B$ on the linear space $\mathfrak{sl}(2)$
amounts to the choice of the covariance 
\begin{align}\label{ao55}
C =\mathbb{E}B_{\tau=1}\otimes B_{\tau=1}\in \mathfrak{sl}(2)\otimes\mathfrak{sl}(2).
\end{align}
%
Since $ C $ is symmetric and positive semidefinite, it admits a decomposition into symmetric rank-one tensors:
\begin{align}\label{ao39}
C =\int\mu(dE) E\otimes E ~ 
\mbox{for a compactly supported probability measure $\mu$ on $\mathfrak{sl}(2)\ni E$}.
\end{align}
This more general representation will be useful when identifying covariances on the Fourier level
in Subsection \ref{ss:coupl}. 

\medskip

We now argue that there is a canonical choice for this covariance (\ref{ao55}).
We note that $\textbf{SO}(2)$ acts on $\mathfrak{sl}(2)\subset{\rm End}(\mbox{cotangent space})$, 
and thus also on $\mathfrak{sl}(2)\otimes\mathfrak{sl}(2)$, by conjugation.
We shall postulate that $B$ or, equivalently, that the covariance (\ref{ao39}) 
is invariant under the action.
We now appeal to standard representation theory, see for instance
\cite[Appendix A1]{OW24}:
The irreducible decomposition under this action is given by
\begin{align}\label{ao33}
\mathfrak{sl}(2)&=\mbox{space of symmetric trace-free endomorphisms}\nonumber\\
&\oplus\mbox{space of skew symmetric endomorphisms};
\end{align}
a compatible basis (also in the sense that $E_2$ arises from applying
an infinitesimal generator of the group action of $\mathbf{SO}(2)$ to $E_1$) of
$\mathfrak{sl}(2)$ is given by
\begin{align}\label{ao42}
E_1=\left(\begin{array}{rr}1&0\\0&-1\end{array}\right),\quad
E_2=\left(\begin{array}{rr}0&1\\1&0\end{array}\right),\quad
E_3=\left(\begin{array}{rr}0&-1\\1&0\end{array}\right);
\end{align}
and (\ref{ao55}) is of the form (\ref{ao39}) with
\begin{align*}
C=\kappa_{sym}(E_1\otimes E_1+E_2\otimes E_2)+\kappa_{skew}E_3\otimes E_3.
\end{align*}
This clearly is of the form (\ref{ao39}) with
\begin{align}\label{ao41}
\mu=\kappa_{sym}(\delta_{E_1}+\delta_{E_2})+\kappa_{skew}\delta_{E_3}.
\end{align}
Hence there are only two parameters to fix, namely $\kappa_{sym}$ and $\kappa_{skew}$, 
and we now argue how this is done canonically.

\medskip

Our first requirement may be paraphrased by saying that in \eqref{ao01bis} Stratonovich = Itô; more formally we postulate
\begin{align}\label{equiv01}
	d F = F d B
	\quad { \rm and ~ show ~ its ~ equivalence ~ to }  \quad 
	2\kappa_{sym}-\kappa_{skew}=0.
\end{align}
In particular the assumption \eqref{equiv01} implies that $F$ is a martingale in the ambient linear space
${\rm End}(\mbox{cotangent space})$.
Our second requirement is the normalization 
\begin{align}\label{equiv02}
\mathbb{E}|F^{\dagger}_{\tau} x|^2= e^\tau |x|^2
	\quad { \rm which ~ we ~ prove ~ to ~ be ~ equivalent ~ to } \quad 2 \kappa_{sym}+\kappa_{skew}=1 
\end{align}
under the assumption \eqref{equiv01}.
Here $F^\dagger\in{\rm End}(\mbox{tangent space})$ denotes the transpose of 
$F\in{\rm End}(\mbox{cotangent space})$.
Together, the conditions in \eqref{equiv01} and \eqref{equiv02} fix the parameters:
\begin{align}\label{sde01}
	2 \kappa_{ skew } = 4 \kappa_{ sym } = 1 .
\end{align}
We thus have identified a canonical diffusion on $\textbf{SL}(2)$.
The rest of this section is devoted to proving the equivalences in \eqref{equiv01} and \eqref{equiv02}.

\medskip

We now turn to the first equivalence \eqref{equiv01}:  To this end, we need to argue that the It\^{o}-Stratonovich correction vanishes.
For clarity we momentarily consider
the more general form $dF=\sigma(F)\circ dB$, where $\sigma(F)$ now is a linear 
endomorphism of ${\rm End}(\mbox{cotangent}$ $\mbox{space})$. While the It\^{o} formulation
is the limit of a forward Euler discretization $F'=F+\sigma(F)$ $(B'-B)$, the Stratonovich
formulation arises from the (second-order)
mid-point rule $F'=F+\sigma(F$ $+\frac{1}{2}\sigma(F)(B'-B))(B'-B)$.
The leading-order correction, divided by the size of the time interval $\tau'-\tau$,
is given by
$\frac{1}{2}(\sigma(F)( \frac{B'-B}{\sqrt{\tau'-\tau}} )).
\nabla(\sigma(F)(\frac{B'-B}{\sqrt{\tau'-\tau}}))$, 
where $\nabla$ denotes the differential of the map 
$F\mapsto \sigma(F)( \frac{B'-B}{\sqrt{\tau'-\tau}})$
on the linear ${\rm End}(\mbox{cotangent space})$.
By the independence of increments of $B$,
the continuum limit of the correction is given by its expectation
w.~r.~t.~$\frac{B'-B}{\sqrt{\tau'-\tau}}$.
Once more by the independence and stationarity of increments of Brownian motion,
the increment $\frac{B'-B}{\sqrt{\tau'-\tau}}$ has the same law as $\frac{B_{\tau'-\tau}}{\sqrt{\tau'-\tau}}$.
By the scaling invariance of $B$, the covariance of 
$\frac{B_{\tau'-\tau}}{\sqrt{\tau'-\tau}}$ is given by 
$C$ defined in (\ref{ao55}). Hence the expectation of the correction is a contraction of
$C$, which can be conveniently expressed in terms of (\ref{ao39}):
\begin{align*}
\mbox{It\^{o}-Stratonovich correction from $\sigma(F)\circ dB$}= 
\frac{1}{2}\int \mu(dE)\,(\sigma(F)E).\nabla(\sigma(F)E).
\end{align*}
%
For our $\sigma(F)=F$, this turns into
\begin{align}\label{ao49}
\mbox{It\^{o}-Stratonovich correction from $F\circ dB$}= \frac{1}{2}F\int \mu(dE)\,E^2.
\end{align}
It follows from (\ref{ao42}), noting that $E_1$ and $E_2$ are reflections while $E_3$
is the rotation by $\frac{\pi}{2}$, and (\ref{ao41}) that this expression collapses to
\begin{align*}
\mbox{It\^{o}-Stratonovich correction}:= \frac{1}{2} (2\kappa_{sym}-\kappa_{skew})F,
\end{align*}
establishing \eqref{equiv01}.

\medskip

We now turn to the second postulate \eqref{equiv02}. Since the law of $B$, and thus that of $F$, is invariant under conjugation by $\textbf{SO}(2)$,
$|x|^{-2}\mathbb{E}|F^{\dagger}_\tau x|^2$ is independent of $x$. Hence the first item in \eqref{equiv02} is equivalent to
\begin{align}\label{ao35}
\mathbb{E}|F_\tau|^2=2e^\tau,
\end{align}
where $|F|^2 = \sum_{ i = 1 }^{ 2 } | F e^{ i } |^2 $ denotes the squared Frobenius norm, and $ \{ e^1, e^2 \} $ the Cartesian basis of cotangent space.
Hence we need to monitor $\mathbb{E}\zeta(F)$
for $\zeta(F)=|F|^2$, which we do based on the It\^{o} evolution equation \eqref{equiv01}.
Recalling that the It\^{o} formulation in \eqref{equiv01} arises from
the forward Euler scheme $F'=F+F(B'-B)$, we have to second order
$\zeta(F')=\zeta(F)+F(B'-B).\nabla\zeta(F)+\frac{1}{2}\nabla^2\zeta(F)(F(B'-B),F(B'-B))$,
where $\nabla^2\zeta$ denotes the second differential of the map $F\mapsto \zeta(F)$
on the ambient ${\rm End}(\mbox{cotangent space})$. Taking the expectation,
the martingale term $F(B'-B).\nabla\zeta(F)$ drops out. The so-called It\^{o} correction 
term arises from the last term; it amounts to the natural
pairing of $\nabla^2\zeta(F)$, a bilinear form on the ambient space ${\rm End}(\mbox{cotangent space})$, and
thus an element of $( {\rm End}(\mbox{cotangent space}) \otimes {\rm End}(\mbox{cotangent space}) )^\dagger$, and the covariance,
which is an element of the tensor product $ {\rm End}(\mbox{cotangent space}) \otimes {\rm End}(\mbox{cotangent space}) $. In terms of (\ref{ao39}), this
can be expressed as
%
\begin{align*}
\mbox{It\^{o} correction for $\zeta(F)$}=
\frac{1}{2}\int \mu(dE)\,\nabla^2\zeta(F).(FE,FE).
\end{align*}
%
When considering $ \zeta ( F ) = | F |^2 $, it is convenient to write $\zeta(F)$ $={\rm tr}F^*F$, where $F^*\in{\rm End}(\mbox{cotangent space})$ is the adjoint of $F$. Hence the above equation turns into
\begin{align}\label{ao50}
\mbox{It\^{o} correction for $|F|^2$}= \int \mu(dE)\,{\rm tr}(FE)^*FE
={\rm tr}F^*F\int \mu(dE)\,EE^*.
\end{align}
It follows from (\ref{ao42}), since $E_1,E_2,E_3 \in\textbf{O}(2)$, and (\ref{ao41})
\begin{align}\label{ao50b}
\mbox{It\^{o} correction for $|F|^2$}:= (2\kappa_{sym}+\kappa_{skew})|F|^2.
\end{align}
Hence we learn that provided the equivalence of Itô and Stratonovich in \eqref{equiv01} holds, then $ 2\kappa_{sym}+\kappa_{skew}=1 $ holds iff $d\mathbb{E}|F|^2=\mathbb{E}|F|^2d\tau$, which in view of
$\mathbb{E}|F_{\tau=0}|^2$ $=|{\rm id}|^2$ $=2$ is equivalent to (\ref{ao35}).
%


\subsection{Intermittent behavior of the Bessel-type process $R:=\frac{1}{2}|F|^2$,
connection to geometric Brownian motion} \label{intermittent}

We shall monitor the Frobenius norm
\begin{align}\label{ao58prev}
R:=\frac{1}{2}|F|^2\ge 1,
\end{align}
where the inequality is a consequence of ${\rm det}F=1$. We will argue that definition \eqref{ao58prev} is consistent in law with the It\^{o} evolution
\begin{align}\label{ao58}
dR=Rd\tau+\sqrt{R^2-1}dw
\quad\mbox{with}\quad R_{\tau=0}=1,
\end{align}
where $w$ denotes one-dimensional Brownian motion.
This can be seen as a non-Abelian/ hyperbolic version of the fact that the (squared)
Euclidean norm of a multi-dimensional Brownian motion satisfies its own
stochastic evolution, known as the Bessel process.
In fact, 
we shall compare a monotone transformation of the $[1,\infty)$-valued process $R$
to the $[0,\infty)$-valued Bessel process $X$ that corresponds to 
the Euclidean norm of the two-dimensional\footnote{
the connection of the {\it three}-dimensional
$F$ to the {\it two}-dimensional Brownian motion should not be surprising in view of
the fact that $R=1$ corresponds to the {\it one}-dimensional set $F\in\mathbf{SO}(2)$} 
Brownian motion, as defined through 
\begin{align}\label{ao74}
dX=\frac{1}{2}\frac{1}{X}d\tau+dw\quad\mbox{with}\quad X_{\tau=0}=0\quad
\mbox{and satisfying}\quad X_\tau>0\quad\mbox{for}\quad\tau>0,
\end{align}
where the latter is a consequence from the (borderline) transience of 
two-dimensional Brownian motion. From the comparison, we will learn
\begin{align}\label{ao69}
R_\tau>1\quad\mbox{for}\quad\tau>0.
\end{align}
The justification of \eqref{ao69} will be given around equation \eqref{ao65} 
further below in this subsection.

\medskip

We now argue that (\ref{ao58}) holds
for (\ref{ao58prev}). Inserting (\ref{sde01}) into \eqref{ao50b}, we infer from
(\ref{equiv01}) the Itô equation for $R=\frac{1}{2}{\rm tr}F^*F$
\begin{align}\label{sde02}
	d R = R d \tau + { \rm tr } F^* F d B .
\end{align}
Applying the same rationale for deriving the It\^{o} correction term on the level of 
(\ref{sde02}), we learn that for an arbitrary observable $\zeta=\zeta(R)$
%
\begin{align}\label{ao81}
d\mathbb{E}\zeta(R)=\Big(\mathbb{E}R\frac{d\zeta}{dR}
+\frac{1}{2}\mathbb{E}\frac{d^2\zeta}{dR^2}\int\mu(dE)({\rm tr}F^*FE)^2\Big)d\tau.
\end{align}
Hence once we establish the formula
\begin{align}\label{ao80}
\int\mu(dE)({\rm tr}GE)^2= \frac{1}{4} ({\rm tr}G)^2-1
\quad\mbox{for symmetric}\;G\in\mathbf{SO}(2),
\end{align}
we see the r.~h.~s.~of (\ref{ao81}) is given by the expectation of 
$(R\frac{d}{dR}+\frac{1}{2}(R^2-1)\frac{d^2}{dR^2})\zeta$,
which indeed is the generator of (\ref{ao58}).
By \eqref{ao41} and \eqref{sde01}, identity (\ref{ao80}) turns into
%
%
\begin{align*}
({\rm tr}GE_1)^2+({\rm tr}GE_2)^2=({\rm tr}G)^2-4{\rm det}G
\quad { \rm for ~ symmetric } ~ G,
\end{align*}
which is checked by an explicit calculation based on the definition (\ref{ao42}) of $E_1$ and $E_2$.

\medskip

We now start comparing $ R $ to the geometric Brownian motion $ S $ and the Bessel process $ X $. Since the It\^{o}-Stratonovich correction term is given by 
$\frac{1}{2}\sqrt{R^2-1}\frac{d}{dR}\sqrt{R^2-1}$ $=\frac{1}{2}R$,
the Stratonovich form of (\ref{ao58}) reads
\begin{align}\label{ao63}
dR=\frac{1}{2}Rd\tau+\sqrt{R^2-1}\circ dw\quad\mbox{with}\;R_{\tau=0}=1.
\end{align}
Intuitively, to be made precise below, 
this process shares for $R\gg 1$ similarities with the geometric Brownian motion
\begin{align}\label{ao59}
dS=\frac{1}{2}Sd\tau+S\circ dw\quad\mbox{with}\;S_{\tau=0}=1.
\end{align}
By the properties of the Stratonovich form (\ref{ao59}), it can be reformulated as
\begin{align}\label{ao75}
d\ln S=\frac{1}{2}d\tau+dw\quad\mbox{with}\;\ln S_{\tau=0}=1,
\end{align}
so that the solution is given by
\begin{align}\label{ao60}
S_\tau=e^{\frac{1}{2}\tau+w_\tau}.
\end{align}

\medskip

It is well-known that the process $S$ displays intermittency: On the one hand, since Brownian motion grows sublinearly
we read off (\ref{ao60}) that
\begin{align}\label{ao68}
S_\tau\le e^{\alpha\tau}\quad\mbox{for any $\alpha>\frac{1}{2}$ almost surely for}
\;\tau\uparrow\infty.
\end{align}
On the other hand, since the It\^{o} form of (\ref{ao59}) is given by
\begin{align}\label{ao61}
dS=Sd\tau+S dw\quad\mbox{with}\;S_{ \tau=0 }=1,
\end{align}
and the last term $Sdw$ is a martingale, we learn that $d\mathbb{E}S=\mathbb{E}Sd\tau$ so that
\begin{align}\label{ao66}
\mathbb{E}S_\tau=e^\tau\stackrel{(\ref{ao35})}{=}\mathbb{E}R_\tau,
\end{align}
From comparing (\ref{ao68}) for some $\alpha<1$ and (\ref{ao66}) 
we learn that $ e^{ - \tau } S_\tau$ cannot be uniformly integrable as $\tau\uparrow\infty$.

\medskip

For later purpose, we quantify this non-uniform integrability.
We claim that (\ref{ao60}) and (\ref{ao66}) imply that high super level sets of $S$
contain already half of its mass:
\begin{align}\label{ao67}
\mathbb{E}SI(S\ge \mathbb{E}^\frac{3}{2}S)=\frac{1}{2}\mathbb{E}S.
\end{align}
Indeed, by (\ref{ao60}) and (\ref{ao66}) this is equivalent to
\begin{align*}
e^{-\frac{\tau}{2}}\mathbb{E}e^{w_\tau}I(w_\tau\ge\tau)=\frac{1}{2}.
\end{align*}
Since $w_\tau$ is a Gaussian of vanishing mean and variance $\tau$, this in turn is equivalent
to the elementary identity $\int_\tau^\infty dw \, e^{-\frac{\tau}{2}+w-\frac{w^2}{2\tau}}$ 
$=\frac{1}{2}\int_{-\infty}^\infty dw' \, e^{-\frac{{w'}^2}{2\tau}}$.

\medskip

We now consider higher moments of $S$ and $R$. To treat both the same way,
we appeal to the It\^{o} form of the respective evolutions, which has the advantage of isolating a martingale term, at the expense of an
It\^{o} correction term. In case of $S$, it
follows from (\ref{ao61}) that
$d\mathbb{E}S^p$ $=p\mathbb{E}S^pd\tau$ $+\frac{p(p-1)}{2}\mathbb{E}S^pd\tau$,
where the last term is the It\^{o} correction. Hence (\ref{ao66}) generalizes to
\begin{align}\label{ao78}
\mathbb{E}S_\tau^p=e^{\frac{p(p+1)}{2}\tau}\stackrel{(\ref{ao66})}{=}
(\mathbb{E}S_\tau)^{\frac{p(p+1)}{2}},
\end{align}
which reveals a strongly non-Gaussian behavior of moments.
On the one hand, based on the
It\^{o} form (\ref{ao58}), and using that $R^2-1\le R^2$, we obtain the
above differential equality for $\mathbb{E}S^p$ in form of a differential
inequality for $\mathbb{E}R^p$, namely
$d\mathbb{E}R^p$ $\le p\mathbb{E}R^pd\tau$ $+\frac{p(p-1)}{2}\mathbb{E}R^pd\tau$.
Hence by integration
\begin{align*}
\mathbb{E}R_\tau^p\le
\mathbb{E}S_\tau^p.
\end{align*}
%
%

\medskip

We now turn to the comparison of $R$ with the Bessel process $X$ and
the geometric Brownian motion $S$ from below. 
To this purpose, we seek a transformation of $[1,\infty)$ such that
the diffusion part of (\ref{ao63}) turns into the one of (\ref{ao59}). Thanks to
the Stratonovich form, this can be reduced to the chain rule and thus 
amount to the differential relation
\begin{align}\label{ao65}
dS=\frac{S}{\sqrt{R^2-1}}dR\quad\mbox{and}\quad S=1\;\mbox{for}\;R=1,
\end{align}
which has the explicit solution
\begin{align}\label{ao73}
S(R)=\exp{\rm arccosh}\,R
\quad\mbox{with inverse}\quad R(S)=\cosh\,\ln S=\frac{1}{2}(S+\frac{1}{S}).
\end{align}
Incidentally, it follows from the last identity that
$S$ is the larger of the two eigenvalues of the symmetric endomorphism $F^*F$.
By \eqref{ao65} and
$\frac{R}{\sqrt{R^2-1}}=\frac{S^2+1}{S^2-1}$, cf.~(\ref{ao73}), we obtain
\begin{align}\label{ao64}
\tilde S_\tau:=S(R_\tau)\quad\mbox{satisfies}\quad
d\tilde S=\frac{1}{2}\tilde S \frac{\tilde S^2+1}{\tilde S^2-1} d\tau+\tilde S \circ dw
\quad\mbox{with}\quad\tilde S_{\tau=0}=1,
\end{align}
which we write as
\begin{align}\label{ao72}
d\ln\tilde S=\frac{1}{2}\frac{\tilde S^2+1}{\tilde S^2-1}d\tau+dw
\quad\mbox{with}\quad\ln\tilde S_{\tau=0}=0.
\end{align}
The elementary inequalities
\begin{align*}
\frac{S^2+1}{S^2-1}\ge\frac{1}{\ln S}\quad\mbox{and}\quad
\frac{S^2+1}{S^2-1}\ge1\quad\mbox{both for}\;S>1
\end{align*}
allow us to compare (\ref{ao72}) with (\ref{ao74}) and with (\ref{ao75}).
Indeed, since the Stratonovich notion arises from smooth
approximation of $w$, we may use standard ODE arguments to conclude
%
\begin{align}\label{aonew}
\ln\tilde S_\tau\ge X_\tau\quad\mbox{and}\quad\tilde S_\tau\ge S_\tau.
\end{align}
Presumably, the second inequality in (\ref{aonew}) 
and thus (\ref{ao62}) can also be inferred\footnote{
As can be seen from (\ref{ao01bis}), the Gram matrix $G:=F^*F$
satisfies its own evolution, namely $dG=G\circ dB+dB^*\circ G$. 
In \cite[Subsection 7.6.3]{LeJan}, this symmetric matrix $G$ with ${\rm det}G=1$ (or rather its
square root, which arises in the polar factorization of $F$) is parameterized  as 
$\left(\begin{array}{rr}S&Y\\Y&\frac{Y^2+1}{S}\end{array}\right)$, where $S$ is 
(in law) given by (\ref{ao60}) and the law of $Y$ is also characterized.
This implies $R=\frac{1}{2}{\rm tr}G\ge\frac{1}{2}(S+\frac{1}{S})$, in line with
the second items in (\ref{ao73}) and (\ref{aonew}).}
from \cite[Subsection 7.6.3]{LeJan}.
From the first item, we learn that (\ref{ao74}) translates into (\ref{ao69}).
Because of the elementary inequality $S(R) \le 2R$, cf.~(\ref{ao73}), the second item yields
\begin{align}\label{ao62}
2R_\tau\ge S_\tau.
\end{align}
%

\medskip

For later purpose, we note that because of (\ref{ao62}), (\ref{ao66}) and (\ref{ao67})
imply
\begin{align}\label{ao79}
\mathbb{E}RI(R\ge \frac{1}{2}\mathbb{E}^\frac{3}{2}R)\ge\frac{1}{4}\mathbb{E}R.
\end{align}
As for $S$, very high super level sets of $R$ already contain a fraction of the total mass.

\subsection{The main result relating $u$ to $F$ and consequences}
We specify a class of initial value problems for the It\^{o} evolution
\begin{align} \label{ao01}
dF=F dB ,
\end{align}
which by \eqref{equiv01} is equivalent to \eqref{ao01bis}.
Denoting by $\tau$ the independent variable in (\ref{ao01}),
for $\tau_*,\tau\ge 0$, we define $F_{\tau_*,\tau}$ as
\begin{align}\label{ao29}
[\tau_*,\infty)\ni\tau\mapsto dF_{\tau_*,\tau}
\quad\mbox{solves (\ref{ao01}) with initial data}\quad F_{\tau_*,\tau=\tau_*}={\rm id},\nonumber\\
\mbox{extended by}\quad F_{\tau_*,\tau}={\rm id}\quad\mbox{for}\;\tau\le \tau_*.
\end{align}
Equipped with this notation, we may formulate our main result;
it shows that the effect of the quenched/environmental noise $b$ 
can be well-represented by a thermal noise $B$:

\begin{theorem}\label{thm:mr01}
There exists a coupling of $b$ and $B$ such that for all $(x,t)$
\begin{align}\label{ao03}
\mathbb{E}\fint_0^Tdt \,
\frac{1}{|x|^2}|u(x,t)-u(0,t)-F_{\tau(|x|^2),\tau(T)}^\dagger x|^2
\lesssim\varepsilon^2\mathbb{E}|F_{0,\tau(T)}|^2 , 
\end{align}
where
\begin{align}\label{ao12}
\tau(s):=\ln\sqrt{1+{\textstyle\frac{\varepsilon^2}{2}}\ln(1+s)}.
\end{align}
\end{theorem}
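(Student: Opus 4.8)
The plan is to establish the coupling through a multiscale/martingale comparison that matches the quenched drift $b$ to the thermal noise $B$ scale by scale in Fourier space. The starting observation is that, ignoring diffusion, $\nabla u$ along $X_t$ satisfies the matrix ODE \eqref{ao30}, which is formally of the same type as \eqref{ao01bis}; the content of the theorem is that once diffusion is reinstated, the effective environment seen by $u$ over dyadic spatial scales is, after the logarithmic time change \eqref{ao12}, a Brownian motion on $\mathfrak{sl}(2)$ with exactly the canonical covariance $C$ fixed in \eqref{sde01}. First I would recall from \cite{OW24,CMOW} the homogenization-type estimate controlling $u(x,t)-u(0,t)$ by its "coarsened" increments: one decomposes the contribution of $b$ into frequency shells $|k|\sim 2^{-j}$, and on each shell the flux correction has a covariance which, by the explicit form \eqref{ao08} and the representation \eqref{ao39}, reproduces the $\textbf{SO}(2)$-invariant tensor $\kappa_{sym}(E_1\otimes E_1+E_2\otimes E_2)+\kappa_{skew}E_3\otimes E_3$ up to a scalar; the logarithmic density of active scales below the point $|x|$ is exactly $\tfrac{\varepsilon^2}{2}\ln(1+|x|^2)$, which after exponentiating the multiplicative structure of \eqref{ao30} yields the time change $\tau(s)$ in \eqref{ao12}.

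Next I would set up the coupling itself. On the level of the driving noises, one builds $B$ as the $L^2$-limit of the rescaled shell-by-shell increments of the $b$-environment sampled along the diffusion $X_t$: concretely, for each dyadic scale one extracts from $\nabla b$ restricted to that shell a centered Gaussian increment in $\mathfrak{sl}(2)$, normalizes it by the correct power of $\varepsilon$ and $\ln$, and declares the concatenation over scales (reparametrized by $\tau$) to be $B$. The two postulates \eqref{equiv01} and \eqref{equiv02} are precisely what guarantee that this $B$ has the right covariance: \eqref{equiv01} (Itô $=$ Stratonovich) is needed because the increments of $u$ are martingale-type increments in the ambient linear space, matching the Itô form \eqref{ao01}; and the normalization \eqref{equiv02}, equivalently \eqref{ao35}, is what calibrates the amplitude so that $\mathbb{E}|F^\dagger_\tau x|^2=e^\tau|x|^2$ reproduces the growth of $\mathbb{E}|u(x,t)-u(0,t)|^2/|x|^2$ dictated by the borderline superdiffusion of \cite{CMOW}. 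With the coupling in place, the difference $u(x,t)-u(0,t)-F^\dagger_{\tau(|x|^2),\tau(T)}x$ is controlled by (i) the error in replacing the true nonlinear flow of \eqref{ao04} by its linearized multiplicative cocycle, and (ii) the error in replacing the genuine shell increments of $b$ by their Gaussian surrogates; both are of relative size $\varepsilon^2$, and the Frobenius-norm factor $\mathbb{E}|F_{0,\tau(T)}|^2$ on the right-hand side accounts for the amplification of this error through the (intermittent, see Subsection \ref{intermittent}) cocycle down to the smallest scale $\tau=0$.

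The time average $\fint_0^T dt$ is essential and I would exploit it as follows: rather than controlling the error pathwise in $t$, one uses the ergodic/averaging structure of the heat semigroup — at time $t$ the diffusion has explored a spatial region of size $\sqrt{t}$, so averaging over $t\in[0,T]$ effectively averages the environment over all scales up to $\sqrt{T}$, and it is this averaged object whose covariance is pinned down by \eqref{ao08}. Technically this is where one invokes the $L^2$-type (spectral gap / Helffer–Sjöstrand) estimates from \cite{CMOW,MOW24} for the corrector associated to \eqref{ao04}, upgraded to track the $\textbf{SL}(2)$-valued cocycle rather than a scalar corrector. The main obstacle, as I see it, is exactly this upgrade: the existing theory in \cite{OW24,CMOW} controls scalar or vector-valued quantities, whereas here one must control the \emph{matrix} cocycle $\nabla u$ and show that its distribution — not merely its second moment — converges to that of $F$; the intermittency quantified in \eqref{ao78}, \eqref{ao79} means the matrix norm has heavy tails, so one cannot close the estimate with second moments alone but must carry along the comparison with geometric Brownian motion $S$ and the Bessel process $X$ from Subsection \ref{intermittent} to get the correct (logarithmically Lipschitz) a priori control on $u$, and then feed that back to justify the linearization. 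Getting the two error mechanisms (i) and (ii) to both be genuinely $O(\varepsilon^2)$ relative — uniformly in the number of scales, which grows like $\varepsilon^{-2}$ — is the delicate bookkeeping at the heart of the proof.
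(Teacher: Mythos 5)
Your high-level picture is right in several respects: the coupling is indeed constructed scale by scale on the Fourier level, the logarithmic time change $\tau(s)$ arises from the logarithmic density of active shells, and the $\textbf{SO}(2)$-invariant representation-theory identifies the covariance. But there are two concrete errors that would derail the execution.

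First, the coupling. You propose to build $B$ from shell increments of $\nabla b$ "sampled along the diffusion $X_t$." This cannot be the coupling, because $X_t$ depends on the thermal noise $W$ as well as on $b$, so the resulting $B$ would not be a function of $b$ alone and would not make sense as a coupling of the two Gaussian objects. The actual coupling in the paper is purely deterministic in $b$ and evaluated at the origin: $B_L := \frac{\sqrt{2}}{\varepsilon}\nabla(-\triangle)^{-1}b_L(0)$, with $L$ and $\tau$ related by $e^{\tau}=\lambda(L^2-1)$. Nothing about the diffusion enters the definition of $B$; the diffusion enters only through the homogenization estimate relating $u$ to the corrector.

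Second, the claim that "one cannot close the estimate with second moments alone" because of intermittency, so that the comparison with geometric Brownian motion and the Bessel process must be carried along. This is the opposite of what happens. Theorem \ref{thm:mr01} is a pure $L^2$ statement and its proof closes entirely in second moments: one inserts the proxy corrector $\tilde\phi_L$ defined by the scale-by-scale SDE \eqref{ao18}, applies the $L^2$ estimate of Theorem \ref{thm:2} (borrowed from \cite{CMOW,MOW24}) to control $u-\tilde u_L$, and then proves the new Lemma \ref{lem:mr03}, an $L^2$ comparison of $\tilde u_L(x)-\tilde u_L(0)$ with $F^\dagger_{\tau_*,\tau}x$ obtained by a differential inequality for $\mathbb{E}|r_L|^2$ driven by quadratic-variation bounds. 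The intermittency analysis of Subsection \ref{intermittent} (geometric Brownian motion, Bessel process, \eqref{ao78}--\eqref{ao79}) is used only for Proposition \ref{prop:mr02}, not for Theorem \ref{thm:mr01}. Relatedly, you do not isolate the central new step: the three-term decomposition $u \to \tilde u_L \to F$ via the residuum $r_L=\tilde u_L(x)-\tilde u_L(0)-F^\dagger_{\tau_*,\tau}x$ and the Grönwall-type argument of Lemma \ref{lem:mr03}, which is where the factor $\tilde\lambda_L/\tilde\lambda_*\sim\mathbb{E}|F_{0,\tau(T)}|^2$ actually comes from — not from "amplification of a small-scale error by an intermittent cocycle." Without defining the proxy corrector and proving the analogue of Lemma \ref{lem:mr03}, the proposal collapses the multi-step approximation into two informal error terms and has no mechanism to actually estimate them.
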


Here and in the sequel, the statement $A\lesssim B$ means that there exists
a constant $C$ such that $A\le C B$.

\medskip

Note that the transposition $F^\dagger$ is a collateral damage of
our definition of the gradient, see the discussion in Subsection \ref{ss:X}. 
We stress the fact that the same coupling works for all space-time points $(x,t)$.
In particular, from letting $|x|\downarrow 0$ in (\ref{ao03}) it follows that
\begin{align}\label{ao07}
\mathbb{E}\fint_0^Tdt|\nabla u(0,t)-F_{0,\tau(T)}|^2
\lesssim\varepsilon^2\mathbb{E}|F_{0,\tau(T)}|^2,
\end{align}
so that $\nabla u(0,t)\approx F_{0,\tau(T)}$ for $\varepsilon\ll 1$ on average in $t\in(0,T)$, which is implicitly contained in \cite[Section 3.3: (52) onwards]{OW24}.
Obviously, (\ref{ao07}) cannot hold uniformly in time since $\nabla u(0,0)$ $={\rm id}$.

\medskip

Note that in view of (\ref{ao07}),  
(\ref{ao03}) seems to take the form of a Taylor remainder estimate.
However, because of the subtle dependence on the radial variable $|x|$ via $\tau(|x|^2)$, 
this is only true w.~r.~t.~the angular dependence on $\frac{x}{|x|}$,
or on average over scales of a given order. Roughly speaking, this suggests a picture where, at very large scales, $u$ is well-approximated by an affine function, the slope of which, however, varies with the scale.  The same informal description has been posited in \cite{ABK24}; see the discussion preceding Conjecture E therein.

\medskip

Note that by homogeneity of (\ref{ao01}) in $\tau$, (\ref{equiv02}) also characterizes the
law of $F_{\tau(|x|^2),\tau(T)}$ appearing on the l.~h.~s.~of (\ref{ao03}).
As a consequence, we obtain

\begin{corollary}\label{Cor:1}
In the regime
\begin{align}\label{ao09}
\varepsilon^2 \lambda(|x|^2)\ll 1
\end{align}
we have
\begin{align}\label{ao06}
\mathbb{E}\fint_0^Tdt\frac{1}{|x|^2}|u(x,t)-u(0,t)|^2\approx
\max\Big\{1,\frac{\lambda(T)}{\lambda(|x|^2)}\Big\},
\end{align}
where
\begin{align}\label{ao11}
\lambda(s):=\sqrt{1+{\textstyle\frac{\varepsilon^2}{2}}\ln(1+s)}
\stackrel{(\ref{ao12}),\eqref{ao35}}{=}\frac{1}{|x|^2}
\mathbb{E}|F_{0,\tau(s)}^{\dagger}x|^2.
\end{align}
\end{corollary}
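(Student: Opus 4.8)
The plan is to derive Corollary~\ref{Cor:1} from Theorem~\ref{thm:mr01} by a triangle-inequality argument in the $L^2(\mathbb{P}\times\fint_0^Tdt)$-norm, treating $F_{\tau(|x|^2),\tau(T)}^\dagger x$ as the dominant term whose size is computed exactly via \eqref{equiv02}. Write $\|\cdot\|$ for the norm $(\mathbb{E}\fint_0^Tdt\,|\cdot|^2)^{1/2}$, so that \eqref{ao03} reads $\||x|^{-1}(u(x,t)-u(0,t)-F_{\tau(|x|^2),\tau(T)}^\dagger x)\|\lesssim\varepsilon\,(\mathbb{E}|F_{0,\tau(T)}|^2)^{1/2}$. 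First I would record the two ingredients on the right: by \eqref{ao35} we have $\mathbb{E}|F_{0,\tau(T)}|^2=2e^{\tau(T)}$, and since $\tau(s)=\ln\lambda(s)$ by comparing \eqref{ao12} and \eqref{ao11}, this equals $2\lambda(T)$; similarly, by homogeneity of \eqref{ao01} in $\tau$ (so that $F_{\tau_*,\tau}$ has the law of $F_{0,\tau-\tau_*}$) together with the conjugation-invariance used to pass from \eqref{equiv02} to \eqref{ao35}, we get $|x|^{-2}\mathbb{E}|F_{\tau(|x|^2),\tau(T)}^\dagger x|^2=e^{\tau(T)-\tau(|x|^2)}=\lambda(T)/\lambda(|x|^2)$, which is the content of the second equality in \eqref{ao11} (for $T\ge|x|^2$; for $T<|x|^2$ the process has not moved and the value is $1$, matching the $\max$).

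Next I would combine these. The lower bound in \eqref{ao06} is immediate: by the triangle inequality,
\begin{align*}
\||x|^{-1}F_{\tau(|x|^2),\tau(T)}^\dagger x\|-\||x|^{-1}(u(x,t)-u(0,t))\|
\le\||x|^{-1}(u(x,t)-u(0,t)-F_{\tau(|x|^2),\tau(T)}^\dagger x)\|\lesssim\varepsilon\sqrt{2\lambda(T)},
\end{align*}
and since $\||x|^{-1}F_{\tau(|x|^2),\tau(T)}^\dagger x\|^2=\max\{1,\lambda(T)/\lambda(|x|^2)\}$ while $\varepsilon^2\lambda(T)=\varepsilon^2\lambda(|x|^2)\cdot(\lambda(T)/\lambda(|x|^2))\ll\lambda(T)/\lambda(|x|^2)$ under \eqref{ao09} (using $\lambda\ge1$ so $\lambda(T)/\lambda(|x|^2)\ge1$), the subtracted error is of smaller order than the main term; hence $\||x|^{-1}(u(x,t)-u(0,t))\|^2\gtrsim\max\{1,\lambda(T)/\lambda(|x|^2)\}$. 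The upper bound is the same computation with the triangle inequality used the other way: $\||x|^{-1}(u(x,t)-u(0,t))\|\le\||x|^{-1}F_{\tau(|x|^2),\tau(T)}^\dagger x\|+\text{(error)}$, and squaring and absorbing the cross term (again admissible since the error is $o$ of the main term under \eqref{ao09}) gives $\lesssim\max\{1,\lambda(T)/\lambda(|x|^2)\}$. Thus $\approx$ holds.

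The only genuinely delicate point is the bookkeeping around \eqref{ao09}: one must check that the smallness regime $\varepsilon^2\lambda(|x|^2)\ll1$ is exactly what is needed to guarantee that $\varepsilon^2\,\mathbb{E}|F_{0,\tau(T)}|^2 = 2\varepsilon^2\lambda(T)$ is negligible compared with $\mathbb{E}|F_{\tau(|x|^2),\tau(T)}^\dagger x|^2/|x|^2 = \lambda(T)/\lambda(|x|^2)$, i.e.\ that $\varepsilon^2\lambda(|x|^2)\ll1$ for \emph{the same} $x$ controls the ratio uniformly in $T$; this is immediate from the identity $\varepsilon^2\lambda(T) = \varepsilon^2\lambda(|x|^2)\cdot\tfrac{\lambda(T)}{\lambda(|x|^2)}$ and $\tfrac{\lambda(T)}{\lambda(|x|^2)}=\max\{1,\tfrac{\lambda(T)}{\lambda(|x|^2)}\}$ when $T\ge|x|^2$, but it is worth stating cleanly. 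I would also remark that no further probabilistic input is needed beyond Theorem~\ref{thm:mr01} and the second moment formula \eqref{equiv02}/\eqref{ao35}: the corollary is a purely deterministic consequence once those two are in hand. Finally, the case $T<|x|^2$ (so $\tau(T)\le\tau(|x|^2)$ and $F_{\tau(|x|^2),\tau(T)}={\rm id}$ by the extension in \eqref{ao29}) should be noted separately, where the main term is exactly $|x|^{-2}|x|^2=1=\max\{1,\lambda(T)/\lambda(|x|^2)\}$, so the same triangle-inequality estimate closes the argument.
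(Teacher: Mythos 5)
Your proof is correct and follows exactly the route the paper has in mind: the paper deduces Corollary~\ref{Cor:1} from Theorem~\ref{thm:mr01} by remarking that ``by homogeneity of \eqref{ao01} in $\tau$, \eqref{equiv02} also characterizes the law of $F_{\tau(|x|^2),\tau(T)}$,'' and your triangle-inequality computation, together with the identities $\tau(s)=\ln\lambda(s)$ and $|x|^{-2}\mathbb{E}|F_{\tau(|x|^2),\tau(T)}^\dagger x|^2=\max\{1,\lambda(T)/\lambda(|x|^2)\}$ (splitting on $T\gtrless|x|^2$ via the extension in \eqref{ao29}), and the observation that $\varepsilon^2\lambda(T)=\varepsilon^2\lambda(|x|^2)\cdot\lambda(T)/\lambda(|x|^2)$, makes that remark precise.
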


Here and in the sequel, a statement of the form ($A\ll B$ $\Longrightarrow$ 
$A'\approx B'$) means that for every $\varepsilon>0$ there exists a $\delta>0$ such that
($A\le\delta B$ $\Longrightarrow$ 
$\frac{1}{1+\varepsilon}B'\le A' \le(1+\varepsilon)B'$).
We learn from (\ref{ao11}) that in our regime of (\ref{ao09}), which ensures a small
noise-to-signal ratio in (\ref{ao03}), a significant dependence on $(x,t)$ is only seen 
on scales that are exponentially large in terms of $\varepsilon$.
From letting $|x|\downarrow 0$ in (\ref{ao06}) it follows in particular
\begin{align*}
\mathbb{E}\fint_0^Tdt|\nabla u(0,t)|^2
\approx 2\lambda(T)\quad\mbox{for}\;\varepsilon\ll 1.
\end{align*}
Note that in the regime $1\ll\lambda(|x|^2)\ll\lambda(T)$ (which implies $1\ll |x|^2\ll T$), 
the r.~h.~s.~of (\ref{ao06}) is $\approx\sqrt{\frac{\ln T}{\ln |x|^2}}$.
Hence the transformation $x\mapsto u(t,x)$ is borderline non-Lipschitz in a wide range of $|x|$.
This is in line with our discussion in Subsection \ref{ss:X}.

\medskip

Our last result lifts the intermittency worked out on the level of $F$
in Subsection \ref{intermittent} to the level of $u$.
We express intermittency in terms of an anomalous scaling of higher stochastic moments $p>1$ of 
quadratic increments of $u$; the exponent $1+\frac{3}{2}(p-1)>p$ in (\ref{ao77})
reveals that the modulus of continuity, more precisely its logarithmic correction, depends on $p$.
We note that the identity (\ref{ao78}) rather suggests the exponent 
$\frac{p(p+1)}{2}$ $>1+\frac{3}{2}(p-1)$ in (\ref{ao77}). 
However, while we can transmit (\ref{ao78}) from $S$ to $R=\frac{1}{2}|F|^2$, 
we are presently not able to transmit it to $|\nabla u|^2$ because our homogenization
result is limited to $p=1$, see Theorem \ref{thm:2}.
In fact, we rather transfer (\ref{ao79}), see \eqref{meet01}.
By a simple post-processing, this leads to (\ref{ao77}), where
$1+\frac{3}{2}(p-1)$ is the first-order Taylor
polynomial of $\frac{p(p+1)}{2}$ at $p=1$.

\begin{proposition}\label{prop:mr02}
For every $1\le p<\infty$ we have in the regime (\ref{ao09}) 
\begin{align}\label{ao77}
\mathbb{E} \Big(\fint_0^Tdt\frac{1}{|x|^2}|u(x,t)-u(0,t)|^2 \Big)^p
\gtrsim_{ p } \max\Big\{1,\frac{\lambda(T)}{\lambda(|x|^2)}\Big\}^{1+\frac{3}{2}(p-1)}.
\end{align}
\end{proposition}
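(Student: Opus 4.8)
\textbf{Proof proposal for Proposition \ref{prop:mr02}.}

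The plan is to transfer the intermittency estimate (\ref{ao79}) from $R$ to the quadratic-increment functional of $u$, and then deduce the moment bound (\ref{ao77}) by a Paley--Zygmund-type argument. To set up notation, abbreviate
\begin{align*}
Q(x,T) := \fint_0^T dt \, \frac{1}{|x|^2}|u(x,t)-u(0,t)|^2,
\qquad
\Phi(x,T) := \frac{1}{|x|^2}\mathbb{E}|F^\dagger_{\tau(|x|^2),\tau(T)}x|^2 .
\end{align*}
By the homogeneity of (\ref{ao01}) in $\tau$ together with (\ref{equiv02}) and (\ref{ao11}), we have $\Phi(x,T) = \lambda(T)/\lambda(|x|^2)$ in the regime where $\tau(|x|^2) \le \tau(T)$, which up to the constant $1$ is exactly the right-hand side of (\ref{ao06}). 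The first step is to observe that Theorem \ref{thm:mr01} controls $Q(x,T)$ by $|F^\dagger_{\tau(|x|^2),\tau(T)}x|^2/|x|^2$ up to an $L^1$-error that is small relative to the signal precisely in the regime (\ref{ao09}): writing $G := F^\dagger_{\tau(|x|^2),\tau(T)}$ and $\xi := x/|x|$, the estimate (\ref{ao03}) reads $\mathbb{E}|Q(x,T)^{1/2} - |G\xi||^{\,2}\text{-type quantity}$, more precisely $\mathbb{E}\,\big|\,(\text{the }L^2_t\text{ norm of }u(x,\cdot)-u(0,\cdot)-G^\dagger x)/|x|\,\big|^2 \lesssim \varepsilon^2 \mathbb{E}|F_{0,\tau(T)}|^2 \lesssim \varepsilon^2 \lambda(T)$ using (\ref{ao11}) and (\ref{ao35}). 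So the proof reduces, by the triangle inequality in $L^2_t$ and then in $L^{2p}_\omega$, to (i) a lower bound for $\mathbb{E}(|G\xi|^2)^p$ of the claimed form, and (ii) absorbing the noise term, which is legitimate because $\varepsilon^2\lambda(T) \ll \lambda(T)/\lambda(|x|^2)$ when (\ref{ao09}) holds and $\lambda(|x|^2)\gtrsim 1$; in the complementary regime $\lambda(T)\approx\lambda(|x|^2)$ the right-hand side of (\ref{ao77}) is $\approx 1$ and one argues more crudely.

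The heart of the matter is step (i): a lower bound $\mathbb{E}(|G\xi|^2/|x|^2 \cdot |x|^2)^p = \mathbb{E}|G\xi|^{2p}$, but since the law of $G\xi$ for unit $\xi$ is rotation-invariant in the relevant sense (conjugation invariance of $F$, cf.\ the argument before (\ref{ao35})), we have $\mathbb{E}|G\xi|^{2p} = \tfrac12\mathbb{E}|G|^{2p}\cdot(\text{const})$ — more carefully, one relates $\mathbb{E}|G\xi|^{2p}$ to $\mathbb{E}R_{\sigma}^p$ where $R = \frac12|F|^2$, $\sigma := \tau(T)-\tau(|x|^2)$, up to dimensional constants depending on $p$. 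Now I would invoke (\ref{ao79}): since $R_\sigma \ge \frac12 \mathbb{E}^{3/2}R_\sigma$ on a set carrying at least a quarter of the mass $\mathbb{E}R_\sigma$, and $\mathbb{E}R_\sigma = e^\sigma = \lambda(T)/\lambda(|x|^2) = \Phi(x,T)$ by (\ref{ao35}) and (\ref{ao12}), we get
\begin{align*}
\mathbb{E}R_\sigma^p \ge \mathbb{E}\,R_\sigma\, \big(\tfrac12 \mathbb{E}^{3/2}R_\sigma\big)^{p-1} I\big(R_\sigma \ge \tfrac12\mathbb{E}^{3/2}R_\sigma\big) \ge \big(\tfrac12\big)^{p-1} \mathbb{E}^{\,\frac32(p-1)}R_\sigma \cdot \tfrac14\mathbb{E}R_\sigma = \tfrac14\big(\tfrac12\big)^{p-1} \Phi(x,T)^{1+\frac32(p-1)},
\end{align*}
which is (\ref{ao77}) up to the $p$-dependent constant, exactly matching "the first-order Taylor polynomial of $\frac{p(p+1)}{2}$ at $p=1$". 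This is the step referenced as \eqref{meet01} in the text.

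The main obstacle — and the reason the proof is not simply "apply Theorem 1 and Hölder" — is step (ii), the interplay between the lower bound for the signal moment and the $L^1$-only control of the error in (\ref{ao03}). One has $Q^{1/2} \ge |G\xi|_{L^2_t\text{-type}} - (\text{error})^{1/2}$, and raising to the $2p$-th power and taking expectations produces cross terms $\mathbb{E}\big[|G\xi|^{2p-1}(\text{error})^{1/2}\big]$ that must be shown lower-order. Here the difficulty is that $|G\xi|$ has heavy (log-normal-type) tails — precisely the intermittency (\ref{ao78}) — so one cannot naively Cauchy--Schwarz the cross term using only an $L^1$ bound on the error. The resolution I would pursue: restrict attention to the good event $\{R_\sigma \ge \frac12\mathbb{E}^{3/2}R_\sigma\}$ from (\ref{ao79}) and, on that event, split according to whether the error $\fint_0^T |u(x,t)-u(0,t)-G^\dagger x|^2/|x|^2\,dt$ exceeds, say, half the deterministic value $\frac14\Phi$; the probability of exceeding it is $\lesssim \varepsilon^2\lambda(T)/\Phi = \varepsilon^2\lambda(|x|^2) \ll 1$ by Markov and (\ref{ao09}), so on the intersection of the good event with the small-error event — which still carries $\gtrsim 1$ fraction of $\mathbb{E}R_\sigma$ — one has $Q(x,T) \gtrsim \Phi$, and the moment bound follows by $\mathbb{E}Q^p \ge \mathbb{E}[Q^p I(\text{good}\cap\text{small-error})] \gtrsim \Phi^p \cdot \mathbb{E}[R_\sigma/\mathbb{E}R_\sigma \cdots]$, ultimately $\gtrsim_p \Phi^{1+\frac32(p-1)}$. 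One must be slightly careful that the Markov bound on the error probability is uniform over the good event, which it is because (\ref{ao03}) is an unconditional expectation; a conditional version would need the coupling to be compatible with the event $\{R_\sigma \text{ large}\}$, but since that event is measurable with respect to $F$ and (\ref{ao03}) holds after integrating out everything, Chebyshev applied to the joint law suffices. Finally, I would note the regime split: when $\lambda(T) \lesssim \lambda(|x|^2)$ (so $\Phi \approx 1$), the bound (\ref{ao77}) claims only $\mathbb{E}Q^p \gtrsim_p 1$, which follows from $\mathbb{E}Q \gtrsim 1$ (itself a consequence of (\ref{ao06}), or more directly of $\mathbb{E}|G\xi|^2 = \lambda(T)/\lambda(|x|^2)\ge\frac12$ and a one-sided version of Theorem \ref{thm:mr01}) together with Jensen in the form $\mathbb{E}Q^p \ge (\mathbb{E}Q)^p$ — wait, that is the wrong direction for $p>1$; instead one uses that on the good event $Q \gtrsim 1$ with probability $\gtrsim 1$, again via (\ref{ao79}) with $\sigma \approx 0$, or simply notes $R_\sigma \ge 1$ always so $\mathbb{E}R_\sigma^p \ge 1$ and transfers this lower bound through the same good-event/small-error splitting.
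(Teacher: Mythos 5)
Your overall strategy is the right one (couple via Theorem~\ref{thm:mr01}, transfer \eqref{ao79}, then Markov/Paley--Zygmund, with a regime split), and your computation lower-bounding $\mathbb{E}R_\sigma^p$ from \eqref{ao79} is exactly the paper's mechanism for producing the exponent $1+\frac{3}{2}(p-1)$. But two steps you flagged as delicate are in fact genuine gaps, and the paper's fix for both is a single device you don't use: \emph{averaging over the angular direction} $\xi=x/|x|$.

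Concretely, the paper sets $X=\fint_{S^1}d\xi\,X(\xi)$ and $Y=\fint_{S^1}d\xi\,|F^\dagger_{\tau_*,\tau}\xi|^2=\tfrac12|F_{\tau_*,\tau}|^2=R$, so that the Bessel-type bound \eqref{ao79} applies \emph{exactly}, with no loss. Your version needs the relation $\mathbb{E}|G\xi|^{2p}\sim\mathbb{E}R_\sigma^p$ for a fixed unit $\xi$; the equality you write ($\mathbb{E}|G\xi|^{2p}=\tfrac12\mathbb{E}|G|^{2p}\cdot\text{const}$) is false for $p>1$, since the angular average does not commute with the $p$-th power, and while a comparison up to $p$-dependent constants can be salvaged by diagonalizing $G^*G$ and integrating $\fint d\theta\,(S^2\cos^2\theta+S^{-2}\sin^2\theta)^p$, this is a nontrivial extra computation the paper never needs. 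Worse, your ``good event'' resolution conflates $R_\sigma$ large with $|G\xi|^2$ large: on $\{R_\sigma\geq\tfrac12\mathbb{E}^{3/2}R_\sigma\}$ the Frobenius norm is large but $|G\xi|^2$ for a \emph{fixed} $\xi$ can still be order $1/R_\sigma$, so $Q\gtrsim\Phi$ does \emph{not} follow on that event. In addition, the intersection argument mixes a mass bound (w.r.t.\ $R_\sigma$, from \eqref{ao79}) with a probability bound (Markov on the error); because $R_\sigma$ has heavy tails, a small-probability error event can in principle carry a large share of the $R_\sigma$-mass, and you have no joint control to rule this out. The paper sidesteps all of this with the elementary pointwise inequality
\begin{align*}
X\,I\big(X<\tfrac14 r^{3/2}\big)\leq Y\,I\big(Y<\tfrac12 r^{3/2}\big)+|X-Y|,
\end{align*}
which transfers the low-level-set mass bound from $Y=R_\sigma$ to $X$ using only the $L^1$ control $\mathbb{E}|X-Y|\ll r$ supplied by Theorem~\ref{thm:mr01} and Corollary~\ref{Cor:1}, with no conditioning; Markov then gives $\mathbb{E}X^p\gtrsim r^{1+\frac32(p-1)}$, and a single application of Jensen in $\xi$ together with isotropy yields $\mathbb{E}X^p\leq\mathbb{E}\big(\fint_0^T dt\,\tfrac{1}{|x|^2}|u(x,t)-u(0,t)|^2\big)^p$, closing the loop. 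Finally, in the regime $\lambda(T)\lesssim\lambda(|x|^2)$ you second-guessed yourself unnecessarily: the chain $1\lesssim(\mathbb{E}Q)^p\leq\mathbb{E}Q^p$ via Jensen for $p\geq1$ is exactly what the paper does and is the correct direction.
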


Here and in the sequel, the subscript $\lesssim_\alpha$ indicates
that the implicit multiplicative constant depends only on $\alpha$.

\medskip

\subsection{Incremental homogenization in large-scale cut-off $L$ 
and proxy corrector $\tilde\phi_L$}
The proof of Theorem \ref{thm:mr01} relies on a homogenization result
from \cite{CMOW}. There, it is used to
derive the super-diffusive asymptotics for the annealed second moment of (\ref{ao02}); prior results
relied on a Fock-space analysis of the non-normal generator of the process $b(\cdot+X_t)$
(the environment as seen from the particle), see \cite{TothValko,CannizaroHaunschmidtSibitzToninelli}. 
Subsequently, homogenization theory was also used to sharpen this result to quenched
moment \cite{ABK24}.
We motivate and state the result from \cite{CMOW}
in the form it is presented in \cite{MOW24}.
We start by writing 
\begin{align*}
u(x,t)=x+\phi(x,t),
\end{align*}
so that (\ref{ao04}) turns into
\begin{align}\label{ao14}
\partial_t\phi-b.\nabla \phi-\triangle \phi=b,\quad \phi(t=0)=0;
\end{align}
it has the advantage that $x\mapsto\phi(x,t)$ inherits the stationarity of $b$.
In view of the parabolic character of (\ref{ao14}), it is plausible that
up to time $T$, the solution $\phi$ explores the drift $b$ only
on length scales $L$ $\lesssim\sqrt{T}$.
We thus introduce the stationary and centered vector field $b_L$ via cut-off on the Fourier level
\begin{align}\label{ao32}
{\mathcal F}b_L(k)=I(L|k|\ge 1){\mathcal F}b(k)
\end{align}
and consider (\ref{ao14}) with $b$ replaced by $b_L$ and (formally) pass to $t\uparrow\infty$
\begin{align}\label{ao15}
-b_L.\nabla \phi_L-\triangle \phi_L=b_L.
\end{align}
Equation (\ref{ao15}) is known to define a stationary and centered gradient field $\nabla\phi_L$,
the corrector in the jargon of stochastic homogenization. Note that in view of (\ref{ao24}),
\begin{align}\label{ao25}
u_L(x):=x+\phi_L(x)\quad\mbox{componentwise satisfies}\quad\nabla.(u_L^ib_L+\nabla^* u_L^i)=0,
\end{align}
where the transpose $\nabla^* u$ is the gradient (tangent) vector field,
and the upper index denotes a Cartesian component of a tangent vector.
Under the coordinates $u_L$, the process $X_L$, 
defined through (\ref{ao02}) with $b$ replaced by $b_L$, turns into a martingale. 
By a classical result\footnote{see e.~g.~the textbook \cite[Chapter 11]{KLO}},
this martingale behaves like a Brownian motion on large space-time scales.
Moreover, the covariance tensor of this Brownian motion in direction $i$
is given by twice the expectation 
$ \E u_{L}^ib_L+\nabla^* u_{L}^i$ of the flux
$u_{L}^{i} b_L+\nabla^* u_{L}^{i}$, cf.~(\ref{ao25}).
In view of the rotational invariance of the ensemble of $b_L$'s,
the covariance tensor has the same property and is 
thus characterized by a scalar $\lambda_L$.
Hence a calculation based on $\mathbb{E}
(u_{L}^{i} b_L+\nabla^* u_{L}^{i}).\nabla\phi_L^{i}$ $=0$, which follows from (\ref{ao25}),
yields the representation 
\begin{align}\label{ao28}
\lambda_L=\mathbb{E}|\nabla u_L^i|^2\quad\mbox{and thus}\quad 2\lambda_L=\mathbb{E}|\nabla u_L|^2,
\end{align}
where by stationarity, it does not matter at which space point $x$ we evaluate $\nabla u_L^i$,
so that we dropped the argument. 
%
%
On the level of the generator of the process, stochastic homogenization means
\begin{align}\label{ao16}
b_L.\nabla+\triangle\approx\lambda_L\triangle,\quad\mbox{loosely speaking}.
\end{align}
We let ourselves guide by (\ref{ao16}) to define a tractable approximation to $\phi_L$
via a scale-by-scale homogenization: We take the derivative of (\ref{ao15}) with respect to $L$,
apply Leibniz' rule and perform two approximations: 
\begin{itemize}
\item Homogenization: Appealing to (\ref{ao16}), we replace 
$-b_L.\nabla d\phi-\triangle d\phi$ by $-\tilde\lambda_L\triangle d\phi$,
for a self-consistent choice choice of $\tilde\lambda_L$, see (\ref{ao22}).
\item Linearization: Retaining just the first-order terms in $\varepsilon$,
we ignore $-db.\nabla\phi_L$.
\end{itemize}
Summing up, we define the stationary centered scalar field $\phi_L$ 
by integration in $L\in[1,\infty)$ of
\begin{align}\label{ao17}
- \tilde\lambda_L\triangle d\phi=d b\quad\mbox{with}\quad\phi_{L=1}=0.
\end{align}
%
The approximation defined through (\ref{ao17}) is not good enough and needs to be post-processed.
The post-processing relies on the following refinement of (\ref{ao16})
\begin{align}\label{ao26}
(b_L.\nabla+\triangle)(1+\phi_L^i\partial_i)\approx\lambda_L\triangle,
\quad\mbox{loosely speaking}
\end{align}
where lowercase indices denote the Cartesian coordinate of a cotangent vector
and we use Einstein's convention of summation over repeated indices,
provided one is uppercase and the other lowercase. What is known as the
two-scale expansion in homogenization, namely the operator $1+\phi_L^i\partial_i$,
intertwines the heterogeneous generator $b_L.\nabla+\triangle$ with the homogeneous
one $\lambda_L\triangle$.
Hence we define $\tilde\phi_L$ via the stochastic differential equation
\begin{align}\label{ao18}
d\tilde\phi=(1+\tilde\phi_L^i\partial_i)d\phi\quad\mbox{with}\quad\tilde\phi_{L=1}=0,
\end{align}
which has to be interpreted in the It\^{o} sense, so that $\tilde\phi$ is a centered
martingale, with values in the space of stationary fields. Returning to
\begin{align*}
\tilde u_L(x,t):=x+\tilde\phi_L(x,t),
\end{align*}
we indeed obtain a good approximation in the regime $\varepsilon\ll 1$:

\begin{theorem}[{\cite[Lemma 7 \& (100)]{CMOW} \& \cite[Proposition 1.1]{MOW24}}]\label{thm:2}
Provided 
\begin{align}\label{ao22}
L=\sqrt{T + 1}\quad\mbox{and}\quad \tilde\lambda_L=\lambda(T) ~ { \rm in } ~ \eqref{ao17}
\end{align}
we have
\begin{align*}
\fint_0^Tdt\mathbb{E}|\nabla u(0,t)-\nabla\tilde u_L(0)|^2\lesssim\varepsilon^2\lambda(T).
\end{align*}
\end{theorem}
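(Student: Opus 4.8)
Since Theorem \ref{thm:2} is attributed to \cite[Lemma 7 \& (100)]{CMOW} and \cite[Proposition 1.1]{MOW24}, my plan is to explain how the estimate is assembled from the structural ingredients already set up in Subsection 1.5, rather than re-derive the full homogenization machinery. The strategy has three layers: (i) compare the true solution $u$ of the finite-time problem \eqref{ao14} with the large-scale-cut-off corrector $u_L$ solving \eqref{ao15} for $L=\sqrt{T+1}$; (ii) compare $u_L$ with the proxy $\tilde u_L$ built from the scale-by-scale two-scale expansion \eqref{ao18}; and (iii) track the $\varepsilon$-dependence of each error, showing it is $O(\varepsilon^2\lambda(T))$ at the level of $\fint_0^T\mathbb{E}|\nabla(\cdot)(0,t)|^2$.

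For step (i), the point is the parabolic heuristic stated in the text: up to time $T$ the solution $\phi$ of \eqref{ao14} only feels $b$ on length scales $L\lesssim\sqrt{T}$, so replacing $b$ by $b_L$ with $L=\sqrt{T+1}$ and passing to the stationary regime $t\uparrow\infty$ is a harmless perturbation. Concretely I would test the difference equation for $\phi-\phi_L$ with $\phi-\phi_L$ itself, use the divergence-free structure \eqref{ao24} to kill the transport term in the energy identity, and control the right-hand side $b-b_L$ (which lives on frequencies $|k|\le L^{-1}$, i.e.\ the largest scales, where by \eqref{ao08} the spectral mass is $\varepsilon^2$ times a bounded factor) together with the initial-layer contribution from $\phi(t=0)=0$ versus the stationary $\phi_L$. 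Both contributions are quadratic in $\varepsilon$ after taking expectations and the relevant scale-counting produces the logarithmic factor packaged into $\lambda(T)$ via \eqref{ao28}.

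Step (ii) is the genuine homogenization estimate and is where I expect the main obstacle. One differentiates \eqref{ao15} in $L$, applies Leibniz, and must quantify the two approximations flagged in the text: the homogenization replacement $b_L.\nabla+\triangle\approx\tilde\lambda_L\triangle$ with the self-consistent choice $\tilde\lambda_L=\lambda(T)$, and the linearization dropping $-db.\nabla\phi_L$. Making \eqref{ao26} rigorous — i.e.\ showing that the two-scale-expansion operator $1+\phi_L^i\partial_i$ genuinely intertwines the heterogeneous and homogeneous generators up to an $H^{-1}$-type error that is $o(1)$ relative to $\varepsilon$ — requires the quantitative corrector bounds and the control of the homogenization commutator from \cite{CMOW}; this is the hard analytic core. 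The residual from the It\^o correction in \eqref{ao18} (the proxy $\tilde\phi$ is only a martingale, not an exact solution) must also be estimated and shown to be consistent with the $\varepsilon^2\lambda(T)$ budget. Granting those inputs, one integrates the differential inequality for $\fint_0^T\mathbb{E}|\nabla(u_L-\tilde u_L)(0,t)|^2$ over $L\in[1,\sqrt{T+1}]$, at each scale losing a factor $\varepsilon^2\,d\lambda_L$, and the total is $\varepsilon^2(\lambda(T)-\lambda(1))\lesssim\varepsilon^2\lambda(T)$ by the definition \eqref{ao11} of $\lambda$.

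Finally, step (iii) is bookkeeping: combining the two comparisons by the triangle inequality in $L^2(\Omega\times(0,T))$ and using $(a+b)^2\le 2a^2+2b^2$ gives $\fint_0^T\mathbb{E}|\nabla u(0,t)-\nabla\tilde u_L(0)|^2\lesssim\varepsilon^2\lambda(T)$, which is the claim. I would also note the consistency check that the normalization $2\lambda_L=\mathbb{E}|\nabla u_L|^2$ of \eqref{ao28}, together with the self-consistency $\tilde\lambda_L=\lambda(T)$, is exactly what makes $\lambda(T)$ the correct effective diffusivity at the cut-off scale $L=\sqrt{T+1}$, so no extra logarithms are lost in the matching. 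The one step I would expect to have to import essentially verbatim from \cite{CMOW,MOW24} is the quantitative sublinearity/stationarity of the corrector $\nabla\phi_L$ underlying \eqref{ao16} and \eqref{ao26}; everything else is energy estimates exploiting \eqref{ao24} and the spectral form \eqref{ao08}.
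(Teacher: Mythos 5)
The paper does not prove Theorem~\ref{thm:2}; it imports it verbatim from \cite[Lemma~7 \& (100)]{CMOW} and \cite[Proposition~1.1]{MOW24}, as the bracketed citation in the theorem header indicates, so there is no proof in this note to compare yours against. Your outline is a fair narrative of where the argument lives: the decomposition (compare the time-dependent $\phi$ driven by $b$ against the stationary $\phi_L$ driven by $b_L$; then compare $\phi_L$ against the scale-by-scale proxy $\tilde\phi_L$; then track $\varepsilon$) mirrors the motivation the paper gives around \eqref{ao15}--\eqref{ao18}, and you correctly flag the quantitative corrector bounds, the control of the homogenization commutator behind \eqref{ao26}, and the It\^o/martingale residual as the parts that must be imported from the references.

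The caveat is that what you wrote is not a proof --- the theorem's content resides entirely in the steps you defer --- and step~(i) as sketched would not go through as stated. $\phi$ solves the parabolic problem \eqref{ao14} with the full drift $b$ while $\phi_L$ solves the elliptic problem \eqref{ao15} with $b_L$, so ``testing the difference with itself'' first requires an intermediate comparison (for instance the parabolic solution with drift $b_L$), and the resulting forcing contains $(b-b_L)\cdot\nabla\phi_L$, which a naive energy estimate does not control without precisely the corrector/stationarity bounds you are deferring; the borderline logarithmic growth packaged into $\lambda(T)$ is exactly where naive energy arguments lose the sharp rate. Since the paper itself also only cites, this is not a disqualifying gap, but I would not sign off on step~(i) being ``harmless.''
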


In view of the inequality $\mathbb{E}|u(x)-u(0)|^2$ $\le|x|^2\mathbb{E}|\nabla u|^2$
for any $u$ with stationary $\nabla u$ we obtain

\begin{corollary}\label{cor:2}
In the setting of Theorem \ref{thm:2} we have
\begin{align*}
\fint_0^Tdt\frac{1}{|x|^2}
\mathbb{E}|(u(x,t)-u(0,t))-(\tilde u_L(x)-\tilde u_L(0))|^2\lesssim\varepsilon^2\lambda(T).
\end{align*}
\end{corollary}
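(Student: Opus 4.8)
The plan is to obtain Corollary~\ref{cor:2} as a one-line consequence of Theorem~\ref{thm:2}, via the elementary Poincar\'e-type bound for random fields with stationary gradient alluded to just before the statement. Write
\[
v(x,t):=u(x,t)-\tilde u_L(x)=\phi(x,t)-\tilde\phi_L(x) ,
\]
so that the integrand to be estimated is $|x|^{-2}\mathbb{E}|v(x,t)-v(0,t)|^2$. Since $x\mapsto\phi(x,t)$ inherits the stationarity of $b$ (see the discussion after \eqref{ao14}) and $\tilde\phi_L$ takes values in the space of stationary fields (see the discussion after \eqref{ao18}), the field $v(\cdot,t)$, and hence $\nabla v(\cdot,t)$, is stationary.

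For the deterministic Poincar\'e step, fix a realization; because of the Fourier cut-offs \eqref{ao08} and \eqref{ao32} the drift $b_L$ is band-limited, so $\phi(\cdot,t)$ and $\tilde\phi_L$ are smooth in space and we may integrate along the segment from $0$ to $x$:
\[
v(x,t)-v(0,t)=\int_0^1\frac{d}{ds}v(sx,t)\,ds=\int_0^1(\nabla v)^\dagger(sx,t)\,[x]\,ds ,
\]
where $(\nabla v)^\dagger\in{\rm End}(\mbox{tangent space})$ is the transpose of $\nabla v$, cf.~the tensorial conventions of Subsection~\ref{ss:X}. Bounding the operator norm by the Frobenius norm, using $|(\nabla v)^\dagger|=|\nabla v|$, and applying Jensen's inequality in the $s$-integral gives the pointwise bound
\[
|v(x,t)-v(0,t)|^2\le|x|^2\int_0^1|\nabla v(sx,t)|^2\,ds .
\]

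Taking the expectation and using the stationarity of $\nabla v(\cdot,t)$ to replace $\mathbb{E}|\nabla v(sx,t)|^2$ by $\mathbb{E}|\nabla v(0,t)|^2$ yields
\[
\frac{1}{|x|^2}\mathbb{E}|v(x,t)-v(0,t)|^2\le\mathbb{E}|\nabla v(0,t)|^2=\mathbb{E}|\nabla u(0,t)-\nabla\tilde u_L(0)|^2 .
\]
Averaging over $t\in(0,T)$ and invoking Theorem~\ref{thm:2} under the calibration \eqref{ao22} then produces the asserted bound $\lesssim\varepsilon^2\lambda(T)$. There is no substantial obstacle here: the only points to watch are the tensorial bookkeeping in the Poincar\'e identity (which factor is a tangent and which a cotangent vector, so that the Cauchy--Schwarz inequality $|(\nabla v)^\dagger[x]|\le|\nabla v|\,|x|$ is applied in the correct spaces) and the mild spatial regularity of $\phi$ and $\tilde\phi_L$ needed to justify the fundamental-theorem-of-calculus step, both of which are immediate from the setup.
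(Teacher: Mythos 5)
Your proof is correct and is exactly the argument the paper has in mind: the paper invokes, without proof, the inequality $\mathbb{E}|u(x)-u(0)|^2\le|x|^2\mathbb{E}|\nabla u|^2$ for fields with stationary gradient (applied to the difference $v=u-\tilde u_L$) and combines it with Theorem~\ref{thm:2}, which is precisely what you do, merely spelling out the fundamental-theorem-of-calculus/Jensen/stationarity derivation of that Poincar\'e-type bound rather than citing it.
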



\subsection{Coupling $B$ to $b$}\label{ss:coupl}
We are now in the position to define the coupling of $b$ and $B$ of Theorem \ref{thm:mr01}.
In view of Theorem \ref{thm:2}, it amounts to connect $\nabla \tilde{u}_L(0)$ 
to $F_{0,\tau}$ defined through (\ref{ao29}).
Applying $\nabla$ to (\ref{ao18}) we obtain the It\^{o} equation
\begin{align}\label{ao20}
d\nabla\tilde u=(\nabla\tilde u+\tilde\phi_L^i\partial_i)\nabla d\phi\quad\mbox{with}\quad
\nabla\tilde u_{L=1}={\rm id}.
\end{align}
The last approximation is to neglect the second r.~h.~s.~term in (\ref{ao20}).
Incidentally,
since the driver $\nabla d\phi$ can be seen to be log-log correlated,
the field $\nabla \tilde u$ may be assimilated to a tensorial version of the Gaussian
multiplicative chaos. Evaluating (\ref{ao20}) at $x=0$ we thus expect $\nabla\tilde u_{L}(0)\approx F$ where $F$ 
satisfies the endomorphism-valued It\^{o} evolution equation
\begin{align}\label{ao21}
dF=F\nabla d\phi(0).
\end{align}
We claim that (\ref{ao21}) is identical to (\ref{ao01}) in law, 
provided we relate the independent variables $L\in[1,\infty)$
and $\tau\in[0,\infty)$ according to
\begin{align}\label{ao27}
e^\tau=\lambda(L^2-1)\stackrel{\varepsilon\ll1}{\approx}\lambda(L^2).
\end{align}
Note that this relationship between $\tau$ and $\lambda$ is consistent 
with the combination of (\ref{ao12}) and (\ref{ao11}).
This identity in law shows that the choice (\ref{ao22}) was self-consistent:
If $F_L$ denotes the solution of (\ref{ao21}) with initial data $F_{L=1}={\rm id}$,
the laws of $F_L$ and $F_{0,\tau}$ are identical.
By relation (\ref{ao27}),
identity (\ref{ao35}) then turns into $\mathbb{E}|F_{L}|^2$ 
$=2\lambda(L^2 - 1)$
$\approx2\lambda(L^2)$.
This is indeed consistent with (\ref{ao28}), if one believes in the approximations
informally discussed above, namely, that $(\nabla u_L,\lambda_L)$ 
is approximated by $(\nabla\tilde u_L,\tilde\lambda_L)$,
and that $\nabla \tilde{u}_{L}(0)$ is approximated by the solution $F_L$ of (\ref{ao21})
with initial value $F_{L=1}={\rm id}$.

\medskip

We now establish this identity in law of (\ref{ao01}) and (\ref{ao21}),
which amounts to showing that the driver $\nabla d\phi(0)$ in (\ref{ao21}), 
after the change of independent variables according to (\ref{ao27}), has the same law as $B$. 
We define the Gaussian process $B$ according to
\begin{align}\label{ao37bis}
B_L:=\frac{\sqrt{2}}{\varepsilon}\nabla(-\triangle)^{-1}b_L(0).
\end{align}
In view of (\ref{ao24}), we have $B_L\in\mathfrak{sl}(2)$.
In view (\ref{ao32}) and the white-noise character of $b$, cf.~(\ref{ao08}),
$L\mapsto B_L$ has independent increments. 
It remains to identify the correlation tensor of $B_L$, 
as an element of ${\rm End}(\mbox{cotangent space})\otimes{\rm End}(\mbox{cotangent space})$. 
We note that by (\ref{ao08}) and (\ref{ao32}), the Fourier transform of 
the tensorial covariance function of $\frac{\sqrt{2}}{\varepsilon}
\nabla(-\triangle)^{-1}b_L$ is given by
\begin{align*}
\frac{2}{|k|^2}I(L^{-1}<|k|\le 1)\big(
\sum_{i=1}^2\frac{(e_i\otimes k)\otimes(e_i\otimes k)}{|k|^2}
-\frac{(k^*\otimes k)\otimes(k^*\otimes k)}{|k|^2}\big),
\end{align*}
where we used that ${\rm End}(\mbox{cotangent space})$ is canonically isomorphic to the tensor product
$(\mbox{tangent space}) \otimes(\mbox{cotangent space})$. We recover
the covariance tensor of the random variable $B_L=\frac{\sqrt{2}}{\varepsilon}
\nabla(-\triangle)^{-1}b_L(0)$ by applying $\int\frac{dk}{2\pi}$ to this identity,
which appealing to polar coordinates yields
\begin{align}\label{ao52}
\mathbb{E} B_L\otimes B_L
=2(\ln L)\fint_{S^1}dk\sum_{i=1}^2(e_i\otimes k)\otimes(e_i\otimes k)
-(k^*\otimes k)\otimes(k^*\otimes k).
\end{align}
From the $\textbf{SO}(2)$-invariance and $L$-dependence of this expression
we deduce that 
\begin{align}\label{ao36}
[0,\infty)\ni \ln L\mapsto B
\in\mathfrak{sl}(2)\quad\mbox{is an $\textbf{SO}(2)$-invariant Brownian motion}.
\end{align}

\medskip

Definition (\ref{ao17}) of $\phi_L$,
together with the choice (\ref{ao22}) of $\tilde\lambda_L$, 
and definition (\ref{ao37bis}) of $B_L$ imply
\begin{align}\label{ao37}
\frac{\sqrt{2}\lambda(L^2-1)}{\varepsilon}\nabla d\phi(0)=dB.
\end{align}
The definition of the change of variables (\ref{ao27})
and the definition (\ref{ao11}) of $\lambda(s)$ imply
\begin{align*}
(\frac{\sqrt{2}\lambda(L^2-1)}{\varepsilon})^2d\tau=d\ln L.
\end{align*}
%
Hence by the parabolic scale invariance of Brownian motion, (\ref{ao36}) translates into
the desired
\begin{align*}
[0,\infty)\ni \tau\mapsto \nabla\phi(0)
\in\mathfrak{sl}(2)\quad\mbox{is an $\textbf{SO}(2)$-invariant Brownian motion}.
\end{align*}

\medskip

It remains to argue that the covariance tensor $\mathbb{E}B_{L=e}\otimes B_{L=e}$
satisfies
the two postulates in Subsection \ref{ss:F}. In terms of the representation (\ref{ao39})
of the covariance tensor, in view of (\ref{ao49}) and (\ref{ao50}) we need to check
\begin{align}\label{ao51}
\int\mu(dE) E^2=0\quad\mbox{and}\quad\int\mu(dE)EE^*={\rm id}.
\end{align}
Now the representation (\ref{ao52}) is of the form (\ref{ao39}).
In order to establish the first item in (\ref{ao51}), 
we note that the integrand in (\ref{ao52}), when replacing $ E \otimes E $ by $ E^2 $, vanishes:
\begin{align*}
\sum_{i=1}^2(e_i\otimes k)(e_i\otimes k)
-(k^*\otimes k)(k^*\otimes k)
=\sum_{i=1}^2(k.e_i)e_i\otimes k
-(k^*\otimes k)=0 .
\end{align*}
For the second item in (\ref{ao51}), we need to replace $ E \otimes E $
by $ E E ^* $ in the integrand (\ref{ao39}), which leads to
\begin{align*}
\sum_{i=1}^2 (e_i\otimes k)(k^*\otimes e_i^*)
-(k^*\otimes k)(k^*\otimes k)
= k^* \otimes k  .
\end{align*}
It remains to note that $\fint_{S^1}dk \, k^*\otimes k$ $=\frac{1}{2}{\rm id}$, 
which by $\textbf{SO}(2)$-invariance can be checked by comparing the traces.


\section{Proofs of the Main Results}

The main new contribution of this note is Theorem \ref{thm:mr01}.  It will be proved by combining results of the previous works \cite{CMOW,MOW24,OW24} with the following lemma, which is a similar linear approximation but applied to $\tilde{u}_{L}$ instead of $u(\cdot,t)$.

\begin{lemma}\label{lem:mr03}
We have with the abbreviations $ \tau_* = \tau( | x|^2 ) $ and $ \tilde\lambda_* = \tilde\lambda_{ | x | } $
\begin{align}\label{pmr03}
\frac{1}{|x|^{2}} \E | \tilde u_L ( x ) - \tilde u_L ( 0 ) - F_{ \tau_*, \tau }^{ \dagger } x  |^2
&\lesssim \frac{ \varepsilon^2 }{ \tilde\lambda_{ * }^2 } \Big( 1 + \frac{ \varepsilon^2 }{ \tilde\lambda_{ * }^2 } \Big) \frac{ \tilde\lambda_L }{ \tilde\lambda_{ * } } . 
\end{align}
\end{lemma}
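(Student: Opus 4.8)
The plan is to compare, in the scale variable $L\in[1,\infty)$ with $\tau=\tau(L)$ tied to $L$ through $e^{\tau}=\tilde\lambda_L$ as in (\ref{ao27}), the two tangent-vector processes
\[
g_L:=\tilde u_L(x)-\tilde u_L(0)=x+\tilde\phi_L(x)-\tilde\phi_L(0),\qquad h_L:=F^{\dagger}_{\tau_*,\tau(L)}\,x .
\]
Writing $L_*:=\sqrt{|x|^2+1}$, so that $\tau_*=\tau(|x|^2)$ corresponds to $L_*$ and $\tilde\lambda_{L_*}\approx\tilde\lambda_*$, the coupling of Subsection~\ref{ss:coupl} gives $h_L\equiv x$ on $[1,L_*]$ and, on $[L_*,\infty)$, $dh_L=h_L^i(\partial_id\phi)(0)$, the transpose of $dF=F\nabla d\phi(0)$ [cf.~(\ref{ao21})] acting on $x$. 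Differentiating (\ref{ao18}) and adding and subtracting $\tilde\phi_L^i(x)(\partial_id\phi)(0)$ and $x^i(\partial_id\phi)(0)$, one finds that $g$ obeys the same linear It\^o equation up to an inhomogeneity, $dg_L=g_L^i(\partial_id\phi)(0)+dN_L$, where $dN_L$ is the sum of the second-order Taylor remainder of $d\phi$ at the origin in the direction $x$ and of $\tilde\phi_L^i(x)\big((\partial_id\phi)(x)-(\partial_id\phi)(0)\big)$; by the independence of increments of $\phi$ [cf.~(\ref{ao17})] and the adaptedness of $\tilde\phi_L$, $dN_L$ is a martingale increment in $L$. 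Hence $\delta_L:=g_L-h_L$ solves $d\delta_L=\delta_L^i(\partial_id\phi)(0)+dN_L$ on $[L_*,\infty)$ with $\delta_{L_*}=\tilde\phi_{L_*}(x)-\tilde\phi_{L_*}(0)$, and the two scale ranges are handled separately.

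On $[1,L_*]$ there is nothing to compare against, so it suffices to bound $\E|\delta_{L_*}|^2=\E|\tilde\phi_{L_*}(x)-\tilde\phi_{L_*}(0)|^2$. Using that $L\mapsto\tilde\phi_L$ is a martingale, I would write this as $\int_1^{L_*}\E|d\tilde\phi_L(x)-d\tilde\phi_L(0)|^2$, bound the integrand by $\E|d\phi_L(x)-d\phi_L(0)|^2+\E|\tilde\phi_L(0)|^2\,\E|\nabla d\phi_L(0)|^2$ up to constants — the first factor being $\lesssim\E|d\phi_L(0)|^2$ because $d\phi_L$ decorrelates on the scale $L\le L_*\approx|x|$ — and evaluate the resulting spectral integrals using $\mathcal F\phi_L\sim(\tilde\lambda_L|k|^2)^{-1}\mathcal Fb_L$, (\ref{ao08}), (\ref{ao32}) and (\ref{ao37}). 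This yields $\E|\delta_{L_*}|^2\lesssim\varepsilon^2\tilde\lambda_*^{-2}|x|^2\big(1+\varepsilon^2\tilde\lambda_*^{-2}\big)$, the second summand coming from the two-scale (nonlinear) term in (\ref{ao18}).

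On $[L_*,L]$ I would solve the linear equation for $\delta$ by variation of constants; its propagator is exactly the transposed $\textbf{SL}(2)$--flow, so $\delta_L=F^{\dagger}_{L_*,L}\delta_{L_*}+\int_{L_*}^L F^{\dagger}_{L',L}\big(dN_{L'}+dC_{L'}\big)$, where $dC$ is the attendant It\^o correction, of the order of the quadratic variation between $(\partial_id\phi)(0)$ and $dN$. Using that $F^{\dagger}_{L',L}$, being built from scales above $L'$, is independent of $\delta_{L_*}$ and of $dN_{L'}$, that martingale increments at distinct scales are orthogonal, and that $\E|F^{\dagger}_{L',L}|^2=2\tilde\lambda_L/\tilde\lambda_{L'}$ — which is (\ref{ao35}) together with (\ref{equiv01}), the $\tau$-homogeneity of (\ref{ao01}), and (\ref{ao27}) — one obtains
\[
\E|\delta_L|^2\ \lesssim\ \frac{\tilde\lambda_L}{\tilde\lambda_*}\,\E|\delta_{L_*}|^2\ +\ \tilde\lambda_L\int_{L_*}^L\frac{\E|dN_{L'}|^2}{\tilde\lambda_{L'}}\ +\ \tilde\lambda_L\Big(\int_{L_*}^L\frac{(\E|dC_{L'}|^2)^{1/2}}{\sqrt{\tilde\lambda_{L'}}}\Big)^{\!2} .
\]
The first term is controlled by the previous step. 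For the rest, since $L\ge L_*\gtrsim|x|$ one has $|k\cdot x|\lesssim1$ on the support of $d\phi_L$, so a Fourier-side computation shows that the Taylor-remainder part of $dN$ contributes $\lesssim\varepsilon^2|x|^4\tilde\lambda_L^{-2}L^{-3}\,dL$, the two-scale part contributes $\lesssim\E|\tilde\phi_L(0)|^2\cdot\varepsilon^2|x|^2\tilde\lambda_L^{-2}L^{-3}\,dL\lesssim\varepsilon^4|x|^2\tilde\lambda_L^{-4}L^{-1}\,dL$ (using $\E|\tilde\phi_L(0)|^2\lesssim\varepsilon^2L^2\tilde\lambda_L^{-2}$), and $dC$ is of comparable size; performing the $L'$-integrals — changing variables through $dL'/L'=2\tilde\lambda_{L'}\varepsilon^{-2}\,d\tilde\lambda_{L'}$ and noting the integrands are either dominated near $L'\sim L_*$ or carry an integrable $\tilde\lambda^{-4}$ tail — each contribution is $\lesssim\varepsilon^2\tilde\lambda_*^{-2}(1+\varepsilon^2\tilde\lambda_*^{-2})(\tilde\lambda_L/\tilde\lambda_*)|x|^2$, which is (\ref{pmr03}).

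I expect the step on $[L_*,L]$ to be the main obstacle: the multiplier of $\E|\delta_{L_*}|^2$ must be exactly the linear second-moment growth $\tilde\lambda_L/\tilde\lambda_*$ of the $\textbf{SL}(2)$--flow and not a larger power of it, yet a crude Gronwall estimate for $L\mapsto\E|\delta_L|^2$ would acquire an It\^o cross-term inflating the effective rate $d\tilde\lambda_L/\tilde\lambda_L$ by a constant factor, so one must propagate $\delta$ through the exact transposed flow $F^{\dagger}_{L',L}$ (equivalently, weight the Gronwall inequality by a scale factor tending to $1$); the remaining effort is the bookkeeping of $dC$ and of the two-scale term, which are precisely the error sources responsible for the factor $1+\varepsilon^2/\tilde\lambda_*^2$.
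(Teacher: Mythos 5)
Your overall strategy mirrors the paper's: you form the same residuum $r_L=\tilde u_L(x)-\tilde u_L(0)-F_{\tau_*,\tau}^\dagger x$ (your $\delta_L$), derive the same inhomogeneous linear It\^o equation $dr=r.\nabla d\phi(0)+df$ with the same forcing $df$ (Taylor remainder of $d\phi$ plus the two-scale term $\tilde\phi^i(x)(\partial_i d\phi(x)-\partial_i d\phi(0))$), control the initial condition at $L_*=|x|$ via Lemma \ref{proxy_est}, and your Fourier-side sizes for the error increments match the paper's covariation estimates \eqref{E: covariation estimate 1}--\eqref{E: covariation estimate 2 b} after the change of variables $d\tau=\tfrac{\varepsilon^2}{2\tilde\lambda^2}\tfrac{dL}{L}$. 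Where you diverge is the integration step: the paper computes $d\,\mathbb E|r|^2=\mathbb E[dr\cdot dr]$ directly (valid because both terms of $dr$ are martingale increments), applies the \emph{exact} identity of Lemma \ref{lem:wr65bis}, $r^ir^j[\partial_i d\phi\cdot\partial_j d\phi]=|r|^2\,d\tau$, bounds the remaining cross and square terms by \eqref{E: covariation estimate 1}--\eqref{E: covariation estimate 2 b}, and integrates the resulting differential inequality with the integrating factor $\tilde\lambda^{-1}=e^{-\tau}$; you instead propose variation of constants through the transposed flow $F_{L',L}^\dagger$.

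Two remarks on that choice. First, the concern you raise about a ``crude Gronwall'' inflating the rate is, in this setting, a misdiagnosis: the direct It\^o computation already yields the growth rate $|r|^2\,d\tau$ with constant exactly $1$, because $\int\mu(dE)\,EE^*={\rm id}$ (the second item of \eqref{ao51}); this is precisely what Lemma \ref{lem:wr65bis} records. No constant is lost, so there is no need to propagate through the exact flow to protect the prefactor. Second, the Duhamel representation $\delta_L=F_{L_*,L}^\dagger\delta_{L_*}+\int_{L_*}^L F_{L',L}^\dagger(dN_{L'}+dC_{L'})$ is an \emph{anticipating} integral: $F_{L',L}^\dagger$ is built from the noise on $[L',L]$, including the increment at scale $L'$, so it is \emph{not} independent of $dN_{L'}$; the orthogonality you invoke when expanding $\mathbb E|\delta_L|^2$ is therefore not justified as stated. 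One can repair this by writing $\delta_L=F_{L_*,L}^\dagger\bigl(\delta_{L_*}+\int_{L_*}^L(F_{L_*,L'}^\dagger)^{-1}\circ(dN_{L'}+dC_{L'})\bigr)$, but that route brings in Stratonovich--It\^o corrections for the inverse flow and is more involved than the paper's ODE argument. The mechanism you identify for the factor $1+\varepsilon^2/\tilde\lambda_*^2$ (the two-scale contribution) is the right one, though in the paper it emerges in the integration of the differential inequality (Step 3, \eqref{pmr09}) rather than in the initial condition estimate.
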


The proof of this lemma will come in Section \ref{sec:mr03} below.  For now, we show how to use it to prove the main results, Theorem \ref{thm:mr01}, Corollary \ref{Cor:1} and Proposition \ref{prop:mr02}.

\medskip

In preparation for the proofs, we recall the following estimate on the proxy corrector $\tilde{\phi}_{L}$:

\begin{lemma} \cite[(31)]{MOW24}\label{proxy_est} We have 
	\begin{align*}
		\mathbb{E} |\tilde{\phi}_{L}|^{2} \lesssim \frac{\varepsilon^{2} L^{2}}{\tilde{\lambda}_{L}^{2}}.
	\end{align*}
\end{lemma}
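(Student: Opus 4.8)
The plan is to monitor the single scalar $Q(L):=\E|\tilde\phi_L|^2$ — which by stationarity equals $\E|\tilde\phi_L(0)|^2$ — and to show it obeys a closed linear ODE in $L$. The point of departure is that, by construction, $L\mapsto\tilde\phi_L(0)\in\mathbb R^2$ is an It\^o martingale: \eqref{ao18} gives the increment $d\tilde\phi_L(0)=d\phi(0)+\tilde\phi_L^i(0)\,\partial_i d\phi(0)$, where $d\phi$ solves \eqref{ao17} and is hence a linear image of the increment $db$ of $b_L$; by \eqref{ao32} and the white-noise character \eqref{ao08} of $b$, the field $b_L$ has independent increments in $L$, so $d\phi$ is independent of everything measurable with respect to $\{b_{L'}:L'<L\}$, in particular of $\tilde\phi_L(0)$. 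Thus $\frac{dQ}{dL}=\frac1{dL}\E|d\tilde\phi_L(0)|^2$, and on expanding the square the cross term $2\,\E[\tilde\phi_L^i(0)\,d\phi^k(0)\,\partial_i d\phi^k(0)]$ drops out because $\tilde\phi_L(0)$ is centered and independent of the scale-$L$ increment.

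I would then compute the two surviving contributions from \eqref{ao17} and \eqref{ao08}. Since $-\tilde\lambda_L\triangle\,d\phi=db$, and the covariance of $db$ is concentrated on the Fourier sphere $|k|=L^{-1}$ with $\mathcal F$-mass $\sim\varepsilon^2$ (the trace of the projector in \eqref{ao08} being $2-1=1$) spread over a two-dimensional shell of area $\sim L^{-3}\,dL$, one gets $\E|db(0)|^2\sim\varepsilon^2 L^{-3}\,dL$, hence
\begin{align*}
\E|d\phi(0)|^2=\frac{L^4}{\tilde\lambda_L^2}\,\E|db(0)|^2\sim\frac{\varepsilon^2 L}{\tilde\lambda_L^2}\,dL,\qquad
\E|\nabla d\phi(0)|^2=\frac1{L^2}\,\E|d\phi(0)|^2\sim\frac{\varepsilon^2}{\tilde\lambda_L^2 L}\,dL .
\end{align*}
For the quadratic term I would use that $b$, and therefore $\nabla d\phi$, has rotationally invariant law, so that $\E[\partial_i d\phi^k(0)\,\partial_j d\phi^k(0)]=\tfrac12\delta_{ij}\,\E|\nabla d\phi(0)|^2$; together with the independence of $\tilde\phi_L(0)$ from $\nabla d\phi(0)$ this collapses the a priori \emph{nonlinear} term into one proportional to $Q$, namely $\E|\tilde\phi_L^i(0)\,\partial_i d\phi(0)|^2=\tfrac12\,Q\,\E|\nabla d\phi(0)|^2$. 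Hence, up to harmless constants,
\begin{align*}
\frac{dQ}{dL}=\frac{\varepsilon^2 L}{\tilde\lambda_L^2}+\frac{\varepsilon^2}{2\tilde\lambda_L^2 L}\,Q,\qquad Q(1)=0 .
\end{align*}

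Finally I would solve this by variation of constants. For the self-consistent $\tilde\lambda_L$ one has $\tilde\lambda_L^2=1+\varepsilon^2\ln L$ (combine \eqref{ao11}, \eqref{ao22}, \eqref{ao27}), so $\frac{d}{dL}\ln\tilde\lambda_L=\frac{\varepsilon^2}{2\tilde\lambda_L^2 L}$, the integrating factor is $\tilde\lambda_L^{-1}$, and $Q(L)=\varepsilon^2\,\tilde\lambda_L\int_1^L\frac{L'}{\tilde\lambda_{L'}^3}\,dL'$. Integrating by parts and using $\frac{d}{dL'}\tilde\lambda_{L'}=\frac{\varepsilon^2}{2L'\tilde\lambda_{L'}}$ gives $\int_1^L\frac{L'}{\tilde\lambda_{L'}^3}\,dL'=\frac{L^2}{2\tilde\lambda_L^3}-\frac12+\frac{3\varepsilon^2}{4}\int_1^L\frac{L'}{\tilde\lambda_{L'}^5}\,dL'$, and since $\tilde\lambda_{L'}^{-5}\le\tilde\lambda_{L'}^{-3}$ the last integral is absorbed into the left-hand side for $\varepsilon^2<\tfrac43$, yielding $\int_1^L\frac{L'}{\tilde\lambda_{L'}^3}\,dL'\lesssim\frac{L^2}{\tilde\lambda_L^3}$ and therefore $Q(L)\lesssim\frac{\varepsilon^2 L^2}{\tilde\lambda_L^2}$. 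The real obstacle is the first step — turning the a priori infinite-dimensional evolution \eqref{ao18} into a \emph{scalar} linear ODE for $\E|\tilde\phi_L|^2$ — which hinges on exactly the two structural facts used above: the independence of the increments of $b_L$ and the rotational invariance of $\nabla d\phi$. With that reduction in hand, the Fourier bookkeeping for the coefficients and the ODE estimate are routine.
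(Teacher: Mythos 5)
The lemma you are proving is not actually proved in this paper; it is cited from \cite[(31)]{MOW24}, so there is no in-text argument to compare against. Your self-contained proof is correct and is very much in the spirit of the paper's own martingale framework. The essential steps all check out: by \eqref{ao18} interpreted in the It\^{o} sense, $L\mapsto\tilde\phi_L(0)$ is a centered martingale driven by the increments $d\phi$, which by \eqref{ao32}, \eqref{ao08} are independent of the past in $L$; hence $dQ = d\mathbb{E}|\tilde\phi_L(0)|^2$ reduces to the quadratic variation of the increment, the cross term vanishes by centering and independence, and by rotational invariance the quadratic term collapses to a multiple of $Q$. Your Fourier computation gives $\mathbb{E}|d\phi(0)|^2 = \frac{\varepsilon^2 L}{\tilde\lambda_L^2}\,dL$ and $\mathbb{E}|\nabla d\phi(0)|^2 = \frac{\varepsilon^2}{\tilde\lambda_L^2 L}\,dL = 2\,d\tau$, which is consistent with Lemma \ref{lem:wr65bis} ($r^ir^j[\partial_i d\phi\cdot\partial_j d\phi]=|r|^2\,d\tau$) — in fact, you could have invoked that lemma directly for the quadratic term and saved the rotational-invariance step. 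Your integrating factor $\tilde\lambda_L^{-1}$ is precisely the $e^{-\tau}$ change of variables used in Step 3 of the paper's proof of Lemma \ref{lem:mr03}, so the ODE argument is structurally identical to what the authors do for the residuum $r$.

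One small caveat worth flagging: the integration-by-parts absorption requires $\varepsilon^2 < \tfrac{4}{3}$. This is harmless in the paper's regime $0\le\varepsilon\ll 1$, but since the lemma as stated carries no explicit restriction, you should either say that $\lesssim$ is understood for $\varepsilon$ bounded, or replace the absorption by a slightly more robust argument (e.g. split the integral at $L' = L/2$ and use monotonicity of $\tilde\lambda$ together with $\tilde\lambda_{L/2}^2 \ge \tilde\lambda_L^2 - \varepsilon^2\ln 2$). Apart from this, the proof is complete; the key structural insight you identify — that independence of the increments of $b_L$ and $\mathbf{SO}(2)$-invariance turn the a priori infinite-dimensional It\^{o} evolution \eqref{ao18} into a closed scalar linear ODE for $\mathbb{E}|\tilde\phi_L|^2$ — is exactly the right one.
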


\begin{proof}[Proof of Theorem \ref{thm:mr01}.] By combining Corollary \ref{cor:2} (recall that $ \tilde\lambda_L = \lambda(T) $ by \eqref{ao22}) with \eqref{pmr03} we estimate
\begin{align}\label{pmr06cont}
\E \fint_0^T dt \, | u ( x, t ) - u ( 0, t ) - F_{ \tau_*, \tau }^{ \dagger } x |^2
\lesssim \varepsilon^2 | x |^2 \Big( \frac{ \tilde\lambda_L }{ \tilde\lambda_{ * } } + \tilde\lambda_L \Big) \quad { \rm provided } ~ | x | \leq L .
\end{align}
In the opposite regime $ L \leq | x | $, we may appeal to Corollary \ref{cor:2} in combination with the bound on $ \tilde\phi $ in the form of the estimate $ \E^{ \frac{1}{2} }  | \tilde \phi_L |^2 \lesssim \varepsilon \frac{ L }{ \tilde\lambda } $ from Lemma \ref{proxy_est}, to conclude
\begin{align}\label{pmr06cont2}
\begin{aligned}
& \E \fint_0^T dt \, | u ( x, t ) - u ( 0, t ) - F_{ \tau_*, \tau }^{ \dagger } x |^2 \\
& \lesssim \varepsilon^2 | x |^2 \tilde\lambda_L  + \E | \tilde \phi_L ( x ) - \tilde \phi_L ( 0 ) |^2
\lesssim \varepsilon^2 | x |^2 \Big( \tilde\lambda_L  + \frac{1}{ \tilde\lambda^2 } \frac{ L^2 }{ | x |^2 } \Big)  \quad { \rm provided } ~ L \leq | x | .
\end{aligned}
\end{align}
In view of $ 2 \tilde\lambda_L = \E | F_{ 0, \tau } |^2 $, see \eqref{ao22} \& \eqref{ao11}, the estimates \eqref{pmr06cont} and \eqref{pmr06cont2} imply \eqref{ao03}.
\end{proof}

We now use Theorems \ref{thm:mr01} and \ref{thm:2} to establish intermittency of the Lagrangian coordinate $u$.  To do so, we will utilize the non-equi-integrability result on the diffusion $F$ from Section \ref{intermittent}.

\begin{proof}[Proof of Proposition \ref{prop:mr02}.]
Let us write $ X = \fint_{S^{1}} d\xi \, \fint_{0}^{T} dt \, \frac{1}{|x|^{2}} |u(|x| \xi,t) - u(0,t)|^{2}$ and $Y = \frac{1}{2} |F_{\tau_{*},\tau}|^{2}$, where here $ d\xi $ is the arc length measure on the unit circle $S^{1}$.  It will be convenient to let $X(\xi) =  \fint_{0}^{T} dt \, \frac{1}{|x|^{2}} |u(|x| \xi,t) - u(0,t)|^{2}$ and $Y(\xi) = |F_{\tau_{*},\tau}^{\dagger} \xi|^{2}$ so that $X = \fint_{S^{1}} d \xi \, X(\xi)$ and likewise for $Y$.  Our first goal is to prove the inequality
\begin{align}\label{pmr02}
\E X I ( X < { \textstyle \frac{1}{4} } r^{ \frac{3}{2} }  ) 
\leq \frac{7}{4} r
\quad { \rm with } \quad
 r = \frac{1}{2} \frac{ \lambda( T ) }{ \lambda( | x |^2 ) } ,
 \end{align}
whenever $ \varepsilon^2 \lambda( | x | ) \ll 1 $ and $\tau - \tau_* \gg 1 $. Recall from Corollary \ref{Cor:1} that asymptotically $ 2r \approx \E X $. In a second step, we post-process this statement into our claim.

\medskip

We start from the observation
\begin{align}\label{pmr02prev}
 X I ( X < { \textstyle \frac{1}{4} } r^{\frac{3}{2}} ) 
\leq Y I ( Y < { \textstyle \frac{1}{2} } r^{\frac{3}{2}} ) + | X - Y | .
\end{align}
On the error term $ | X - Y | $, we first apply Jensen's inequality and the elementary inequality $ || a |^2 - | b |^2|  \leq | a - b | ( 2 | a | + | a - b | )  $ to obtain
\begin{align*}
 \E | X(\xi) - Y(\xi) | 
& \lesssim \Big( \E \fint_0^T dt \, | F_{ \tau_*, \tau } ^{ \dagger }\xi  - |x|^{-1} ( u( |x| \xi, t ) - u ( 0, t ) ) |^2 \Big)^{ \frac{1}{2} } \\
& \times \bigg( \Big( \E \fint_0^T dt \,  | F_{ \tau_*, \tau } |^2 \Big)^{ \frac{1}{2} } + \Big( \E \fint_0^T dt \,  |  F_{ \tau_*, \tau } ^{ \dagger } \xi - |x|^{-1} ( u( |x| \xi, t ) - u ( 0, t ) ) |^2 \Big)^{ \frac{1}{2} } \bigg). 
\end{align*}
Applying Theorem \ref{thm:mr01}, Corollary \ref{Cor:1}, and the bound $\mathbb{E}|X - Y| \leq \fint_{S^{1}} \, d \xi \, \mathbb{E}|X(\xi) - Y(\xi)|$, we find
\begin{align*}
\E | X - Y | 
\lesssim \Big( \varepsilon^2 \frac{\lambda^2(T)}{\lambda(|x|^2)} \Big)^{ \frac{1}{2} } + \varepsilon^{2} \lambda(T)
\ll \frac{1}{2} \frac{\lambda(T)}{\lambda(|x|^2)} = r
\quad { \rm provided} \quad
\varepsilon^2 \lambda( | x |^2 ) \ll 1 .
\end{align*}
Therefore, \eqref{pmr02prev} and \eqref{ao79} together imply
\begin{align*}
\E X I ( X < { \textstyle \frac{1}{4} } r^{ \frac{3}{2} } ) 
\leq \E Y I ( Y < { \textstyle \frac{1}{2} } r^{ \frac{3}{2} } ) + \E | X - Y | 
\leq \frac{7}{4} r
\quad { \rm with } \quad
 r =  \frac{1}{2} \frac{ \lambda( T ) }{ \lambda( | x |^2 ) }
 \end{align*}
whenever $ \varepsilon^2 \lambda( | x | ) \ll 1 $ and $\tau - \tau_* \gg 1$, as claimed in \eqref{pmr02}.

\medskip

To conclude the proposition in case $\tau - \tau_* \gg 1$, let us note that Corollary \ref{Cor:1}, in form of $ \E X \approx 2r  $, and \eqref{pmr02} imply
\begin{align}\label{meet01}
\E X I ( X \geq { \textstyle \frac{1}{4} } r^{ \frac{3}{2} } )
\geq \frac{1}{8} r ,
\end{align}
which by Markov's inequality shows
\begin{align*}
\E X^p 
\geq \frac{1}{8} \Big( \frac{1}{4} r^{ \frac{3}{2} } \Big)^{p - 1} r.
\end{align*}
After applying Jensen's inequality and isotropy in law, this yields
	\begin{align*}
		\mathbb{E} \Big( \fint_{0}^{T} dt \, \frac{1}{|x|^{2}} |u(x,t) - u(0,t)|^{2} \Big)^{p} \geq \E X^{p} \gtrsim r^{1 + \frac{3}{2} (p - 1)}.
	\end{align*}
Recalling again that by Corollary \ref{Cor:1} we have $ 2r \approx \fint_{0}^{T} dt \, \frac{1}{|x|^{2}} |u(x,t) - u(0,t)|^{2}$, this implies the claim in the regime $\tau - \tau_* \gg 1 $.

\medskip

If $ \tau - \tau_* \gg 1 $ does not hold, we may apply Jensen's inequality to \eqref{ao06} to obtain
\begin{align*}
	1 \lesssim \Big( \mathbb{E}\fint_0^Tdt\frac{1}{|x|^2}|u(x,t)-u(0,t)|^2 \Big)^p
	\leq   \mathbb{E} \Big( \fint_0^Tdt\frac{1}{|x|^2}|u(x,t)-u(0,t)|^2 \Big)^p,
\end{align*}
so that the proposition holds trivially in this case.
\end{proof}

\section{Proof of Lemma \ref{lem:mr03}}\label{sec:mr03}

\subsection{Strategy of Proof} Our interest in Lemma \ref{lem:mr03} is in the increments of $\tilde{u}_L $.  Fix $x \in \mathbb{R}^{2}$.  Recall from the introduction that since $ \tilde u_L = x + \tilde\phi_L $, we may use \eqref{ao18} to rewrite
	\begin{align*}
		& d (\tilde{u}(x) - \tilde{u}(0)) \\ 
			&= d \phi(x) + \tilde{\phi}^{i}(x) \partial_{i} d \phi (x) - d \phi ( 0 )- \tilde{\phi}^{i} ( 0 ) \partial_{i} d \phi ( 0 ) \\
			&= ( x^i + \tilde{\phi}^i(x) - \tilde{\phi}^i(0) )  \partial_i d \phi(0)
			+ \tilde\phi^i(x) \partial_i ( d \phi(x) - d \phi(0) )
			+ d ( \phi(x) - \phi(0) - x . \nabla \phi ( 0 ) ) .
	\end{align*}
After regrouping terms, this says that 
	\begin{align} \label{E: equation for increment}
		d(\tilde{u}(x) - \tilde{u}(0)) &= ( \tilde{u} (x) - \tilde{u} (0) ) . \nabla d \phi ( 0 )  + df,
	\end{align}
where $f$ is defined through
	\begin{equation} \label{E: forcing}
		df = \tilde{\phi} (x) . \nabla ( d \phi(x) - d \phi(0) ) + d ( \phi(x) - \phi(0) - x . \nabla \phi(0) ).
	\end{equation}
In view of the equation \eqref{ao01} satisfied by $F$ and \eqref{E: equation for increment}, we may write
	\begin{equation*}
	r_{L} := \tilde{u}_{L}(x) - \tilde{u}_{L}(0) - F_{ \tau_{*}, \tau }^{\dagger} x ,
	\end{equation*}
where the residuum $ r_L $ solves the initial value problem
	\begin{align}
		d r &= r . \nabla d \phi(0) + df, \quad r_{L_{*}} = \tilde{\phi}_{L_{*}}(x) - \tilde{\phi}_{L_{*}}(0)  \label{E: equation of mu}
	\end{align}
and, consistently with the notation for $ \tau_* $ and $ \tilde\lambda_* $, we write $ L_* = | x | $.
In particular, written in terms of $r$, the main result \eqref{pmr03} states that
	\begin{equation} \label{E: main result linearization rewritten}
		 \frac{1}{|x|^{2}}\mathbb{E} | r_{L} |^{2} \lesssim \varepsilon^{2} \frac{\tilde{\lambda}}{\tilde{\lambda}_{*}} .
	\end{equation}
When establishing \eqref{E: main result linearization rewritten}, we rely on the observation that to leading order, the driver in $  r . \nabla d \phi(0) $ in \eqref{E: equation of mu} leads to the differential inequality $ d \E | r |^2 \leq \E | r |^2 d \tau + ... $, which contains \eqref{E: main result linearization rewritten} upon establishing that initially $ \frac{1}{|x|^{2}} \mathbb{E} | r_* |^{2} \lesssim \frac{\varepsilon^{2}}{\tilde{\lambda}_{*}^{2}} $. More specifically, we use the following lemma in terms of quadratic variations.
\begin{lemma}[{\cite[Lemma 4.4 \& (75)]{OW24}}]\label{lem:wr65bis}
	We have\footnote{In the language of \cite[Definition 4.2]{OW24}, we have\
	\begin{align*}
	{ \textstyle r_i r_j [ \partial_i d \phi ( 0 ) \cdot \partial_j d \phi ( 0 ) ] = \sum_{ i = 1 }^{2}  [ e^i \otimes r . \nabla d \phi ( 0 ) \,  e^i \otimes r . \nabla d \phi ( 0 ) ] = \sum_{ i = 1 }^{2} e^i \otimes r \diamond  e^i \otimes r \frac{d \tilde\lambda^2 }{ \tilde\lambda^2 }, }
	\end{align*}
	so that \cite[(75)]{OW24} indeed applies.}
	\begin{align}\label{wr65bis}
		r^i r^j [ \partial_i d \phi \cdot \partial_j d \phi  ] = | r |^2 d \tau .
	\end{align}
\end{lemma}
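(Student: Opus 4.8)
The plan is to recognize the left-hand side of \eqref{wr65bis} as the infinitesimal quadratic variation of the tangent-vector-valued process $\tau \mapsto r.\nabla\phi(0)$ (with $r$ held fixed) and to evaluate it from the explicit law of the driver $\nabla\phi(0)$ identified in Subsection~\ref{ss:coupl}. Unwinding the notation, $\partial_i d\phi$ is the tangent vector obtained from $\nabla d\phi(0) \in (\mbox{tangent})\otimes(\mbox{cotangent})$ by pairing its cotangent slot with the $i$-th coordinate direction, so that $r^i \partial_i d\phi = r.\nabla d\phi(0)$ is precisely the term $r.\nabla d\phi(0)$ appearing in \eqref{E: equation of mu}; concretely, under the transpose convention for ``$.$'' (the same one under which $F^\dagger x$ solves $d(F^\dagger x) = (F^\dagger x).\nabla d\phi(0)$, which is what makes $r_L$ well-defined), one has $r.\nabla d\phi(0) = (\nabla d\phi(0))^\dagger r$. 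The bracketed scalar $r^i r^j [ \partial_i d\phi \cdot \partial_j d\phi ]$ is then nothing but $d\langle r.\nabla\phi(0)\rangle$, the quadratic variation of this increment.

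The first real step is to invoke the conclusion of Subsection~\ref{ss:coupl}: $[0,\infty)\ni\tau\mapsto\nabla\phi(0)\in\mathfrak{sl}(2)$ is the canonical $\mathbf{SO}(2)$-invariant Brownian motion, whose covariance per unit $\tau$ is $C = \int\mu(dE)\,E\otimes E$ with $\mu$ as in \eqref{ao41}, \eqref{sde01}; equivalently, $\mu$ satisfies the two identities \eqref{ao51}. Since the increments of $\nabla\phi(0)$ are Gaussian and independent of the past, the quadratic variation of $(\nabla d\phi(0))^\dagger r$ is deterministic and is obtained by contracting $C$ twice against $r$ through the transpose, giving
\[
r^i r^j [ \partial_i d\phi \cdot \partial_j d\phi ] = \Big(\int\mu(dE)\,|E^\dagger r|^2\Big)\,d\tau .
\]
It then remains to check $\int\mu(dE)\,|E^\dagger r|^2 = |r|^2$. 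The quickest route is concrete: from \eqref{ao42} each of $E_1,E_2,E_3$ is orthogonal, hence $|E_k^\dagger r|^2 = |r|^2$ for $k=1,2,3$, so by \eqref{ao41} and \eqref{sde01} the integral equals $(2\kappa_{sym}+\kappa_{skew})|r|^2 = |r|^2$. (Equivalently, abstractly: $|E^\dagger r|^2 = r\cdot(EE^* r)$, so the integral is $r\cdot\big(\int\mu(dE)\,EE^*\big)r = |r|^2$ by the second identity in \eqref{ao51}.) This establishes \eqref{wr65bis}.

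The only point requiring care is the tangent/cotangent and transpose/adjoint bookkeeping in the first paragraph; once it is in place, \eqref{wr65bis} is simply the infinitesimal form of the normalization postulate \eqref{equiv02}, i.e.\ of $\mathbb{E}|F^\dagger_\tau x|^2 = e^\tau|x|^2$ (differentiating the latter in $\tau$ along $d(F^\dagger x) = (F^\dagger x).\nabla d\phi(0)$ isolates exactly the quadratic-variation rate computed above), which is precisely the condition by which $\kappa_{sym}$ and $\kappa_{skew}$ were pinned down. This is the content of \cite[Lemma~4.4 \& (75)]{OW24}, with the translation recorded in the footnote to the statement.
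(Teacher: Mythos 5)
Your proposal is correct and follows essentially the same route as the paper: you identify $r^i\partial_i d\phi = (\nabla d\phi(0))^\dagger r$ with (a time-changed version of) $dB^\dagger r$ via Subsection~\ref{ss:coupl}, compute the quadratic variation as the contraction $\int\mu(dE)\,|E^\dagger r|^2\,d\tau = \int\mu(dE)\,(EE^*)^\dagger r\cdot r\,d\tau$, and conclude from the second item in \eqref{ao51}. The extra concrete route via the orthogonality of $E_1,E_2,E_3$ and the closing remark tying the identity back to \eqref{equiv02} are pleasant supplements but coincide in substance with the paper's argument.
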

Note that in \eqref{wr65bis}, we may suppress the spatial argument of $ \nabla d \phi $ since by the independence of $ \nabla d \phi $'s increments and stationarity the quadratic variation in \eqref{wr65bis} is constant (w.~r.~t.~the spatial variable) and deterministic.

\medskip

Note that in this note, \eqref{wr65bis} may easily be derived from the second item in \eqref{ao51}. Indeed, we may use definition \eqref{ao37} to infer that $ r^i  \partial_i d \phi $ is identical in law to $ d B^{ \dagger } r $ modulo the change of variables \eqref{ao27}. Since $ B $ has independent increments, this shows that the l.~h.~s.~of~\eqref{wr65bis} is nothing but $ d \E | B^{ \dagger } r |^2 $, so that Lemma \ref{lem:wr65bis} reduces to understanding the evolution of the covariance of $ B^{ \dagger } $ as in \eqref{ao39}. For our purposes, the second item in \eqref{ao51} is sufficient to conclude
\begin{align*}
r^i r^j [ \partial_i d \phi \cdot \partial_j d \phi  ]  
= \int \mu( dE ) ( E E^* )^{ \dagger } r \cdot r
\stackrel{ \eqref{ao51} }{=} | r |^2 d \tau ,
\end{align*}
as claimed in Lemma \ref{lem:wr65bis}.

\subsection{Proof of Lemma \ref{lem:mr03}}

To obtain this estimate, we will utilize estimates of covariation terms coming from the driver $\nabla d\phi$ and the forcing $df$. Toward that end, in Appendix \ref{sec:appendix-quad-var} below, we prove that\footnote{The following inequalities have to be understood in the sense of measures, i.e. $ | \mu | \lesssim \nu \Leftrightarrow \int f d | \mu | \leq \int f d \nu $ for any non-negative (measurable) function $ f $.}
	\begin{align}
		| [\partial_{i} d\phi (0) \cdot (\partial_{j}d\phi (x) - \partial_{j}d\phi (0) )] | &\lesssim | x | \frac{d \tau }{L}, \label{E: covariation estimate 1} \\
		|[  (\partial_{i}d\phi (x) - \partial_id\phi(0) ) \cdot (\partial_{j}d\phi (x) - \partial_{j}d\phi (0) )]| &\lesssim | x |^2 \frac{d \tau}{L^2}, \label{E: covariation estimate 1 b} \\
		| [\partial_{i} d\phi (0) \cdot (d\phi (x) - d\phi (0) - x .\nabla d\phi (0))] | &\lesssim | x |^{2} \frac{d \tau}{L} \label{E: covariation estimate 2}, \\
		[  (d\phi (x) - d\phi (0) - x .\nabla d\phi (0)) \cdot (d\phi (x) - d\phi (0) - x .\nabla d\phi (0))] &\lesssim | x |^{4} \frac{d \tau}{L^2}. \label{E: covariation estimate 2 b}
	\end{align}

\begin{proof}[Proof of Lemma \ref{lem:mr03}.] We split the proof into three steps.

\medskip

\textit{Step 1 (Differential inequality for the residuum $ r $ \& $ \E | r |^2 $).} We start with the claim
\begin{align}\label{pmr08}
		d\mathbb{E} | r |^{2} - \mathbb{E} | r |^{2} d \tau
		&\lesssim \varepsilon |x| \mathbb{E}^{\frac{1}{2}} | r |^{2} \frac{ d \tau }{ \tilde\lambda }
		+ |x|^{2} \mathbb{E}^{\frac{1}{2}} | r |^{2} \frac{ d \tau }{ L } 
		+ \varepsilon^{2} |x|^{2} \frac{ d \tau }{ \tilde{\lambda}^{2} }
		+ |x|^{4} \frac{ d \tau }{ L^{2} } ,
	\end{align}
which we integrate in the next step.

\medskip

 By using the SDE \eqref{E: equation of mu}, and stationarity of $ \nabla d \phi $, we can write
	\begin{align*}
		[dr \cdot dr] = r^{i} r^{j} [ \partial_{i} d\phi  \cdot \partial_{j} d\phi ]
				+ 2 r^i [ \partial_i d\phi ( 0 ) \cdot df ]
				+ [ df \cdot df ].
	\end{align*}
Upon using \eqref{wr65bis} on the first term, and the definition \eqref{E: forcing} of the forcing $ df $ on the second term, this equation becomes
	\begin{align*}
		[ dr \cdot dr ]
		= | r |^{2} d \tau
		&+2  r^i \tilde{\phi}^{j}(x) [ \partial_{i} d\phi (0) \cdot (\partial_{j} d\phi (x) - \partial_{j} d\phi (0) ) ] \\
		&+ 2 r^i [ \partial_i d\phi (0) \cdot (d\phi (x) - d\phi (0) - x . \nabla d\phi (0) ) ] + [ df \cdot df ] .
	\end{align*}
Taking the estimates \eqref{E: covariation estimate 1} and \eqref{E: covariation estimate 2} for granted for now, we bound the first two error terms 
	\begin{align*}
		\mathbb{E} r^i \tilde{\phi}^j(x) [ \partial_i d\phi (0) \cdot (\partial_{j} d\phi(x) - \partial_{j} d\phi (0) ) ]
		&\lesssim |x| \big( \mathbb{E}| r |^{2} \mathbb{E}|\tilde{\phi}|^{2} \big)^{\frac{1}{2}} \frac{d \tau }{L}, \\
		\mathbb{E} r^i [ \partial_{i} d\phi (0) \cdot ( d\phi (x) - d\phi (0) - x . \nabla d\phi (0) ) ]
		&\lesssim | x |^{2}  \mathbb{E}^{\frac{1}{2}} | r |^{2} \frac{d \tau}{L},
	\end{align*}
whilst the quadratic variation $ [ df \cdot df ] $ can be estimated by first invoking the definition \eqref{E: forcing} of $ df $ and then applying \eqref{E: covariation estimate 1 b} and \eqref{E: covariation estimate 2 b}, so that
	\begin{align*}
		\mathbb{E}[ df \cdot df ] &\lesssim \mathbb{E}\tilde{\phi}^{i}(x) \tilde{\phi}^{j}(x) [ (\partial_i d\phi (x) - \partial_{i} d\phi (0) ) \cdot (\partial_{j} d\phi (x) - \partial_{j} d\phi (0) ) ]  \phantom {  \frac{d \tilde{\lambda}^{2}}{L^2 \tilde{\lambda}^{2}} }
 \\
			&+ [(d\phi (x) - d\phi (0) - x.\nabla d\phi (0) ) \cdot (d\phi (x) - d\phi (0) - x.\nabla d\phi (0) )] \\
			& \lesssim |x|^{2} \mathbb{E} |\tilde{\phi}|^{2} \frac{d \tau}{L^{2}}
			+ | x |^{4} \frac{d \tau}{L^2} .
	\end{align*}
It remains to notice that $ d \E | r |^2 = \E [ dr \cdot dr ] $ so that by putting it all together, we find
	\begin{align*}
		d\mathbb{E} | r |^{2}  - \mathbb{E} | r |^{2} d \tau
		&\lesssim |x| \big( \mathbb{E}| r |^{2} \mathbb{E}|\tilde{\phi}|^{2} \big)^{\frac{1}{2}} \frac{d \tau}{L}
		+  |x|^{2} \mathbb{E}^{\frac{1}{2}} | r |^{2} \frac{d \tau}{L}
		+ |x|^{2} \E | \tilde\phi |^2 \frac{ d \tau }{ L^2 }
		+ |x|^{4} \frac{ d \tau }{L^{2}}.
	\end{align*}
Appealing again to the bound on $\tilde{\phi}$ in Lemma \ref{proxy_est}, we can rearrange this to obtain \eqref{pmr08}.

\medskip

\textit{Step 2 (Estimate on the initial data $ \E | r_* |^2 $).}  Next, we estimate the initial condition by
	\begin{align} \label{bound_ic}
		\frac{1}{|x|^{2}} \mathbb{E} | r_* |^{2} \lesssim \frac{\varepsilon^{2}}{\tilde{\lambda}_{*}^{2}}.
	\end{align}
	for $ r_* = r_{L_{*}}$. Toward that end, we invoke the triangle inequality, stationarity of $\tilde{\phi}_{L_{*}}$, and our estimate of $\mathbb{E} |\tilde{\phi}_{ L_* } |^{2}$, see Lemma \ref{proxy_est}, to find 
	\begin{align*}
		\mathbb{E} | r_{L_{*}} |^{2} &= \mathbb{E} |\tilde{\phi}_{L_{*}}(x) - \tilde{\phi}_{L_{*}}(0)|^{2} \leq 4 \mathbb{E}|\tilde{\phi}_{L_{*}}|^{2} \lesssim \frac{\varepsilon^{2} L_{*}^{2}}{\tilde{\lambda}_{*}^{2}}.
	\end{align*}
Since $L_{*} = |x|$ by definition, this yields the estimate \eqref{bound_ic}.

\medskip
	
\textit{Step 3 (Estimate of $ \mathbb{E} | r |^{2} $).} Next, we use an ODE argument to infer
\begin{align}\label{pmr09}
	\frac{1}{ | x |^2 } \E | r_{L} |^2 
	\lesssim \frac{ \tilde\lambda }{ \tilde\lambda_* } \bigg( \frac{1}{ | x |^2 } \E | r_* |^2
	+ \frac{ \varepsilon^2 } { \tilde\lambda_*^2 }  \Big( 1
	+ \frac{ \varepsilon^2  }{ \tilde\lambda_*^{ 2 } } \Big) \bigg) .
\end{align}
To this end, we switch to the variables $ R $ and $ \tau $, where
\begin{align*}
	R = \E | r |^2 .
\end{align*}
Upon multiplying the differential inequality \eqref{pmr08} by the integrating factor $ \frac{1}{\tilde\lambda} = \exp( - \tau ) $ and recalling that $ \frac{ d }{ d \tau } \frac{ R }{ \tilde\lambda } = \frac{ 1 }{ \tilde\lambda } ( \frac{ d }{ d \tau } R - R ) $, the equation becomes
\begin{align*}
\frac{ d }{ d \tau } \frac{ R }{ \tilde\lambda }
\lesssim \varepsilon \frac{ | x | R^{ \frac{1}{2} } }{ \tilde\lambda^2 }
+ \frac{ | x |^2 R^{ \frac{1}{2} } }{ L \tilde\lambda }
+ \varepsilon^2 \frac{ | x |^2 }{ \tilde\lambda^3 }
+ \frac{ | x |^4 }{ L^2 \tilde\lambda } ,
\end{align*}
which implies after integration that
\begin{align*}
	\frac{ R }{ \tilde\lambda }  - \frac{ R_* }{ \tilde\lambda_* }
	& \lesssim \Big( \varepsilon | x |  \int_{ \tau_* }^{ \tau } d\tau \frac{1}{ \tilde\lambda^{ \frac{3}{2 } } }
	+ \frac{ | x |^2 }{ \tilde\lambda_*^{ \frac{5}{2} } }  \int_{ \tau_* }^{ \tau } d\tau \frac{ \tilde\lambda^2 }{ L } 
	\Big) \sup_{ [ \tau_*, \tau] } \Big( \frac{ R }{ \tilde\lambda  } \Big)^{ \frac{1}{2} }
	+ \varepsilon^2 | x |^2  \int_{ \tau_* }^{ \tau } d\tau \frac{1}{ \tilde\lambda^{ 3 } }
	+ \frac{ | x |^4 }{ \tilde\lambda_*^3 } \int_{ \tau_* }^{ \tau } d\tau \frac{ \tilde\lambda^2 }{ L^2 } .
\end{align*}
Since $\frac{d \tau}{\tilde{\lambda}^{p}} = \frac{1}{2} \frac{d \tilde{\lambda}^{2}}{(\tilde{\lambda}^{2})^{1 + \frac{p}{2}}}$ and $d \tau \frac{\tilde{\lambda}^{2}}{L^{p}} = \frac{1}{2} d \tilde{\lambda}^{2} \exp ( - \frac{p}{\varepsilon^{2}} (\tilde{\lambda}^{2} - 1) )$, we have
\begin{align*}
\int_{ \tau_* }^{ \tau } d\tau\, \frac{ \tilde\lambda^2 }{L^p} \lesssim \frac{ \varepsilon^2 }{ p L_*^p }
\quad { \rm and } \quad
\int_{ \tau_* }^{ \tau } d\tau \, \frac{1}{ \tilde\lambda^p } \lesssim \frac{ 1 }{ p \tilde\lambda_*^{p} }
\quad { \rm for ~ any } ~ p > 0 ,
\end{align*}
so that we may post-process the above estimate to
\begin{align*}
	\frac{ R }{ \tilde\lambda }  - \frac{ R_* }{ \tilde\lambda_* }
	& \lesssim \varepsilon \Big( \frac{ | x | }{ \tilde\lambda_*^{ \frac{3}{2 } } }
	+ \varepsilon \frac{ | x |^2 }{ L_* \tilde\lambda_*^{ \frac{5}{2} } }
	\Big) \sup_{ [ \tau_*, \tau] } \Big( \frac{ R }{ \tilde\lambda  } \Big)^{ \frac{1}{2} }
	+ \varepsilon^2 \Big( \frac{  | x |^2 }{ \tilde\lambda_*^{ 3 } }
	+  \frac{ | x |^4 }{ L_*^2 \tilde\lambda_*^3 } \Big) .
\end{align*}
Using Young's inequality and the fact that $ | x | = L_* $, the above inequality implies \eqref{pmr09}.
\end{proof}

\subsection{Appendix: Estimates on Quadratic Variations}\label{sec:appendix-quad-var}

Here we prove the covariation estimates \eqref{E: covariation estimate 1},  \eqref{E: covariation estimate 1 b}, \eqref{E: covariation estimate 2} and \eqref{E: covariation estimate 2 b} that were needed above. To this end, let us recall from \cite[Appendix A]{OW24} that
\begin{align}\label{app01}
	[ { \rm tr } ( \nabla d \phi )^* \nabla d \phi ] \lesssim d \tau 
	\quad { \rm and } \quad 
	\sum_{ i = 1 }^{ 2 } \, [ { \rm tr } ( \nabla \partial_i d \phi )^* \nabla \partial_i d \phi ] \lesssim \frac{ d \tau }{ L^2 } .
\end{align}
In this text, the first item may, in view of definition \eqref{ao37}, be inferred from \eqref{ao51}, whilst the second item follows from the first as in \cite[argument for (65)]{OW24}. The logic is that every derivative gains a factor of $ \frac{1}{L} $. We will use these estimate, to first derive \eqref{E: covariation estimate 1 b}. It is then a routine computation that we include for the readers convenience to derive \eqref{E: covariation estimate 1}, \eqref{E: covariation estimate 2} and \eqref{E: covariation estimate 2 b}.

\medskip

\textit{Argument for \eqref{E: covariation estimate 1 b}.} Upon applying the Kunita-Watanabe-/ Cauchy-Schwarz-inequality, we may restrict to the case $ i = j $. To compute the quadratic variation, we first exploit the independence of $ \nabla d \phi $'s increments, and appeal to the mean-value inequality afterwards. By stationarity and \eqref{app01} this implies
	\begin{align*}
		& \Big[ \int_{ \tau_- }^{ \tau_+  } (\partial_{i} d \phi (x) - \partial_{i} d \phi (0) ) \cdot \int_{ \tau_- }^{ \tau_+  } (\partial_{i} d \phi (x) - \partial_{i} d \phi ( 0 ) ) \Big]
		= \mathbb{E} \Big| \int_{ \tau_- }^{ \tau_+  } (\partial_{i} d \phi (x) - \partial_{i} d \phi (0) ) \Big|^2 \\
		& = \mathbb{E}  \Big| x . \int_{0}^{1} ds  \int_{ \tau_- }^{ \tau_+  } \nabla \partial_{i} d \phi (s x) \Big|^{2} 
		\leq |x|^{2} \mathbb{E} \Big|  \int_{ \tau_- }^{ \tau_+  } \nabla \partial_{i} d \phi  \Big|^{2} 
		\lesssim |x|^{2}  \int_{ \tau_- }^{ \tau_+  }\frac{ d \tau }{ L^2 }  .
	\end{align*}
In view of the arbitrariness of the scales $ \tau_{-} $ and $ \tau_+ $ this implies \eqref{E: covariation estimate 1 b}.

\medskip

\textit{Argument for \eqref{E: covariation estimate 1}.} Recall that, by the Kunita-Watanabe inequality
	\begin{align*}
		& \Big|  \int_{ \tau_- }^{ \tau_+  } [ \partial_i d \phi (0) \cdot (\partial_j d \phi (x) - \partial_j d \phi ( 0 ) )] \Big| \\
		&\leq \Big( \int_{ \tau_- }^{ \tau_+  } \frac{1}{L} [ \partial_i d \phi \cdot \partial_i d \phi ] \Big)^{\frac{1}{2}}
		\Big(  \int_{ \tau_- }^{ \tau_+  } L [ (\partial_{j} d \phi (x) - \partial_{j} d \phi (0) ) \cdot ( \partial_j d \phi (x) - \partial_{j} d \phi (0 )) ] \Big)^{\frac{1}{2}}
	\end{align*}
	for any two scales $ \tau_{-}, \tau_{+} $. Hence, we may deduce from \eqref{E: covariation estimate 1 b} and the first item in \eqref{app01} that 
	\begin{align*}
		\Big|  \int_{ \tau_- }^{ \tau_+  }  [ \partial_{j} d \phi^{i} (0) \, (\partial_{\ell} d \phi^{i}(x) - \partial_{\ell} d \phi^{i}(0) )] \Big| \lesssim | x |  \int_{ \tau_- }^{ \tau_+  } \frac{ d\tau }{ L }.
	\end{align*}
Again, since the scales $ \tau_{-} $ and $ \tau_{+} $ are arbitrary this implies \eqref{E: covariation estimate 1}.

\medskip

\textit{Argument for \eqref{E: covariation estimate 2}.} Again, we proceed by Taylor, this time in form of
\begin{align*}
 \int_{ \tau_- }^{ \tau_+  } ( d \phi  (x) - d \phi (0) - x.\nabla \phi (0) ) 
=  \int_0^1 ds  \int_{ \tau_- }^{ \tau_+  } x . \nabla ( d \phi ( s x ) - d \phi (0) ) .
\end{align*}
By virtue of Jensen's inequality, followed by \eqref{E: covariation estimate 1 b}, this identity implies
\begin{align*}
	& \Big[  \int_{ \tau_- }^{ \tau_+  } ( d \phi (x) - d \phi (0) - x . \nabla d \phi (0) ) \cdot  \int_{ \tau_- }^{ \tau_+  } ( d \phi (x) - d \phi (0) - x . \nabla d \phi (0) )  \Big] \\
	& \leq \int_0^1 ds \Big[  \int_{ \tau_- }^{ \tau_+  } x . \nabla ( d \phi ( s x ) - d \phi (0) ) \cdot   \int_{ \tau_- }^{ \tau_+  } x . \nabla ( d \phi ( s x ) - d \phi (0) ) \Big]
	\lesssim |x|^{4}  \int_{ \tau_- }^{ \tau_+  } \frac{ d \tau }{ L^{2} },
	\end{align*}
as claimed.

\medskip

\textit{Argument for \eqref{E: covariation estimate 2}.} Similar to the argument for \eqref{E: covariation estimate 1}, Kunita-Watanabe inequality, \eqref{E: covariation estimate 2 b} and the second item in \eqref{app01} imply \eqref{E: covariation estimate 2}.

\section*{Acknowledgements}

We thank B\'{a}lint T\'{o}th, Fabio Toninelli, and Giuseppe Cannizzaro for helpful discussions.  We also benefitted from conversations with Tomasz Komorowski and Ofer Zeitouni.

\end{document}